\newcommand{\A}{\mathcal{A}}
\newcommand{\eps}{\varepsilon}
\newcommand{\Bin}{\mathsf{Bin}}
\newcommand{\MP}{\mathsf{MP}}
\newcommand{\E}{\mathcal{E}}
\newcommand{\Prb}[1]{\mathrm{Pr}\Bigl[\,#1\,\Bigr]}
\newcommand{\V}{\mathrm{V}_N}
\newcommand{\D}{\mathcal{D}_R}
\newcommand{\G}{\mathcal{G}}
\newtheorem{theorem}{Theorem}  %[chapter]
\newtheorem{lemma}[theorem]{Lemma}
\newtheorem{claim}[theorem]{Claim}
\newtheorem{corollary}[theorem]{Corollary}
\newtheorem{definition}[theorem]{Definition}
\numberwithin{theorem}{section}
\numberwithin{equation}{section}
\title{On the evolution of random graphs on spaces of negative curvature}
 \author{Nikolaos Fountoulakis}
 \thanks{This research has been supported by a Marie Curie Career Integration Grant PCIG09-GA2011-293619.}
\date{\today}
\begin{document}

\maketitle 
\begin{abstract}
In this work, we study a family of random geometric graphs on hyperbolic spaces. In this setting, $N$ points are chosen randomly 
on a hyperbolic space and any two of them are joined by an edge with probability that depends on their hyperbolic distance, 
independently of every other pair. In particular, when the positions of the points have been fixed, the distribution over 
the set of graphs on these points is the Boltzmann distribution, where the Hamiltonian is given by the sum of  
weighted indicator functions for each pair of points, with the weight being proportional to a real parameter $\beta>0$ (interpreted as the 
inverse temperature) as well as to the hyperbolic distance between the corresponding points. This class of random graphs was introduced by
Krioukov et al.~\cite{ar:Krioukov}. 
We provide a rigorous analysis of aspects of this model and its dependence on the parameter $\beta$. We show that a phase transition 
occurs around $\beta =1$. More specifically, we show that when $\beta > 1$ the degree of a typical vertex is bounded in probability (in fact it
follows a distribution which for large values exhibits a power-law tail whose exponent depends only on the curvature of the space), whereas 
for $\beta <1$ the degree is a random variable whose expected value grows polynomially in $N$. When $\beta = 1$, we establish 
logarithmic growth. 

For the case $\beta > 1$, we establish a connection with a class of inhomogeneous random graphs known as the \emph{Chung-Lu} model. 
Assume that we use the Poincar\'e disc representation of a hyperbolic space. If we condition on the distance of each one of the points from the
origin, then the probability that two given points are adjacent is expressed through the kernel of this inhomogeneous
random graph. 
\end{abstract}

\section{Introduction} 

The theory of geometric random graphs was initiated by Gilbert~\cite{Gilbert61} already in 1961 in the context of 
what is called \emph{continuum percolation}. There, a random infinite graph is formed whose vertex set is the set of 
points of a stationary Poisson point process in the 2-dimensional Euclidean space and two vertices/points are joined 
when their distance is smaller than some certain threshold. The parameter explored there is the probability that a given vertex   
is contained in an infinite component. About a decade later, in 1972, Hafner~\cite{ar:Hafner72} focused on the typical 
properties of large but finite random geometric graphs. Here $N$ points are sampled within a certain region of $\mathbb{R}^d$ 
following a certain distribution (most usually this is the uniform distribution or the distribution of the point-set of a Poisson 
point process) and any two of them are joined when their Euclidean distance is smaller than some threshold which, in general, is 
a function of $N$. In the last two decades, this kind of random graphs was studied extensively by several research groups -- see 
the monograph of Penrose~\cite{bk:Penrose} and the references therein. Numerous typical properties of such random graphs 
have been investigated, such as the chromatic number~\cite{McDiarmid}, Hamiltonicity~\cite{Balogh} etc. 

From the point of view of applications, random geometric graphs on Euclidean spaces have been considered as models for wireless
communication networks. Though the above model might seem slightly simplistic, more complicated random models have been developed
which incorporate various parameters of actual wireless networks; see for example~\cite{Gilbert61} or~\cite{ar:Bac} for a more 
sophisticated model. 

However, what structural characteristics emerge when one considers these points distributed on a curved space where distances are measured
through some (non-Euclidean) metric? Such a model was introduced 
by Krioukov et al.~\cite{ar:Krioukov} and some typical properties of these random graphs were studied with the use of non-rigorous methods.

\subsection{Random geometric graphs on a hyperbolic space} 
The most common representations of the hyperbolic space is the upper-half plane representation $\{z \ : \ \Im z > 0 \}$ as 
well as the Poincar\'e unit disc which is simply the open disc of radius one, that is, $\{(u,v) \in \mathbb{R}^2 \ : \ 1-u^2-v^2 > 0 \}$. 
Both spaces are equipped with the hyperbolic metric; in the former
case this is ${1\over (\zeta y)^2}dy^2$ whereas in the latter this is ${4\over \zeta^2}~{du^2 + dv^2\over (1-u^2-v^2)^2}$, where 
$\zeta$ is some positive real number. 
It can be shown that the (Gaussian) curvature in both cases is equal to $-\zeta^2$ and the two spaces are isometric, that is, there 
exists a bijection between the two spaces which preserves (hyperbolic) distances. In fact, there are more representations of the 2-dimensional
hyperbolic space of curvature $-\zeta^2$ which are isometrically equivalent to the above two. We will denote by $\mathbb{H}^2_\zeta$ the
class of these spaces.

In this paper, following the definitions in~\cite{ar:Krioukov}, we shall be using the native representation of $\mathbb{H}^2_\zeta$. 
Under this representation, the ground space of $\mathbb{H}^2_{\zeta}$ is $\mathbb{R}^2$ and every point $x \in \mathbb{R}^2$ whose
polar coordinates are $(r,\theta)$ has hyperbolic distance from the origin equal to $r$. Also, a circle of radius $r$ around the origin 
has length equal to $2\pi \sinh \zeta r$ and area equal to $2\pi (\cosh \zeta r - 1)$.  

We are now ready to give the definitions of the two basic models introduced in~\cite{ar:Krioukov}.
Consider the native representation of the hyperbolic space of curvature $K = - \zeta^2$, for some $0 < \zeta < 2$. 
Let $N= e^{\zeta R/2}$ -- thus $R$ is a function of $N$ and in particular 
$\zeta R= 2\log N$. 
We create a random graph  by selecting randomly $N$ points from the disc of radius $R$ centred at the origin $O$, which we denote 
by $\D$. The distribution of these points is as follows.  Assume that a random point $u$ has 
polar coordinates $(r, \theta)$. Then $\theta$ is uniformly distributed in $(0,2\pi]$, whereas the probability density function of 
$r$, which we denote by $\rho (r)$, is determined by a parameter $\alpha >0$ and is equal to 
\begin{equation} \label{eq:pdf}
 \rho (r) = \alpha {\sinh  \alpha r \over \cosh \alpha R - 1}. 
\end{equation}
When $\alpha = \zeta$, then this is the uniform distribution.  
This set of points will be the vertex set of the random graph and we will be denoting this random vertex set by $\V$. 
We will be also treating the vertices as points in the hyperbolic space indistinguishably.  
\begin{enumerate}
\item[1.] \emph{The disc model}
This model is the most commonly studied in the theory of random geometric graphs on Euclidean spaces. 
We join two vertices if they are within (hyperbolic) distance $R$ from each other. 

\item[2.] \emph{The binomial model}
We join any two distinct vertices $u, v$ with probability 
$$ p_{u,v} = {1\over \exp\left(\beta ~ {\zeta \over 2}(d(u,v)- R)\right) + 1},$$
independently of every other pair, where $\beta >0$ is fixed and $d(u,v)$ is the hyperbolic distance between $u$ and $v$. 
We denote the resulting random graph by $\G (N;\zeta, \alpha, \beta)$. 
\end{enumerate}
The parameter $\beta >0$ is interpreted as the inverse of the temperature of a fermionic system where particles correspond to 
edges. The distance between two points determines the field that is incurred by the pair.  
In particular, the field that is incurred by the pair $\{u,v \}$ is $\omega_{u,v}=\beta~{\zeta \over 2}~(d(u,v) - R)$. 

An edge between two points corresponds to a particle that ``occupies" the pair. 
In turn, the Hamiltonian of a graph $G$ on the $N$ points, assuming that their positions on $\D$ have been realized, is 
$H(G)= \sum_{u,v} \omega_{u,v} e_{u,v}$, where $e_{u,v}$ is 
the indicator that is equal to $1$ if and only if the edge between $u$ and $v$ is present. (Here the sum is over all distinct unordered pairs of
points.) Each graph $G$ has probability weight that 
is equal to $e^{-H(G)}/Z$, where $Z= \prod_{u,v} \left( 1 + e^{-\omega_{u,v}}\right)$ is the normalizing factor also  known as 
the partition function. 
It can be shown (cf. ~\cite{ar:Park} for example) that in this distribution the probability that $u$ is adjacent to $v$ is equal to 
$1/({e^{\omega_{u,v}}+1})$. See also~\cite{ar:Krioukov} for a more detailed description.  

In this paper, we focus on this model. As we shall see, the central structural features of the resulting random graph heavily 
depend on the value of $\beta$. In particular, we shall distinguish between three regimes: 
\begin{enumerate}
\item[1.] $\beta >1$ (\emph{cold regime}) the random graph $\G (N;\zeta, \alpha, \beta)$ has 
constant average degree depending on $\beta$ and $\zeta$.
\item[2.]  $\beta =1$ (\emph{critical regime}) the average degree grows logarithmically in $N$. 
\item[3.] $\beta < 1$ (\emph{hot regime}) the average degree of $\G (N;\zeta, \alpha, \beta)$ grows polynomially in $N$. 
\end{enumerate}

We shall make these findings precise immediately. 

\subsection{Results} 

The main results of this paper describe the degree of a typical vertex of $\G (N;\zeta, \alpha, \beta)$. 
For a vertex $u \in V_N$ we let $D_u$ denote the degree of $u$ in $\G(N;\zeta, \alpha, \beta)$. 
Our first theorem regards the cold regime. 
\begin{theorem} \label{thm:DegreeDistribution} 
If $\beta > 1$ and $0 < \zeta/ \alpha < 2$, then 
$$ D_u \stackrel{d}{\rightarrow} \MP(F), $$
where $\MP (F)$ denotes a random variable that follows the mixed Poisson distribution with mixing distribution $F$ such that   
$$F(t) = 1 - \left({K \over t}\right)^{2\alpha/\zeta},$$
for any $t\geq K$, where $K = {4\alpha \over 2\alpha - \zeta}~{1 \over \beta}~\sin^{-1} \left( {\pi \over \beta} \right)$ and
 $F(t)= 0$ otherwise. 

In particular, as $k$ and $N$ grow
$$ \Prb{D_u = k} = {2\alpha\over \zeta}~K^{2\alpha /\zeta}~k^{-(2\alpha /\zeta +1)} +o(1), $$
that is, the degree of vertex $u$ follows a power law with exponent $2\alpha /\zeta +1$. 
\end{theorem}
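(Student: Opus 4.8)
The plan is to condition on the position of $u$ and reduce the statement to a Poisson approximation for a sum of independent indicators, then read off the mixing distribution. Write $r$ for the radial coordinate of $u$. Conditional on $r$, the events $\{u \sim v\}$ over the $N-1$ other vertices $v$ are independent and identically distributed, each occurring with probability
$$ p(r) = \int_0^R \rho(r')\left(\frac{1}{2\pi}\int_0^{2\pi}\frac{d\phi}{\exp\left(\beta\tfrac{\zeta}{2}(d-R)\right)+1}\right)dr', $$
where $d=d(r,r',\phi)$ is given by the hyperbolic law of cosines $\cosh\zeta d = \cosh\zeta r\cosh\zeta r'-\sinh\zeta r\sinh\zeta r'\cos\phi$. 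Hence $D_u \mid r \sim \Bin(N-1,p(r))$. The heart of the argument is to show $(N-1)p(r)\to\lambda(r):=Ke^{\zeta(R-r)/2}$, after which the conditional law of $D_u$ converges to $\mathsf{Poisson}(\lambda(r))$ and the unconditional law to the mixture $\MP(F)$, with $F$ the law of $\lambda(r)$ when $r\sim\rho$.

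To evaluate $p(r)$ I would replace the exact distance by the large-radius approximation $\zeta d = \zeta(r+r')+2\log\sin(\phi/2)+o(1)$, valid away from a negligible range of angles, and use $\rho(r')\sim\alpha e^{\alpha(r'-R)}$ together with $\cosh\alpha R-1\sim\tfrac12 e^{\alpha R}$. Under this approximation the integrand factorizes: with $A=\exp(\beta\tfrac{\zeta}{2}(r+r'-R))$ the angular average becomes $g(A)=\tfrac{2}{\pi}\int_0^{\pi/2}(A\sin^\beta\psi+1)^{-1}d\psi$. Both hypotheses enter precisely here. For the angular part, the tail asymptotic $g(A)\sim\tfrac{2}{\beta\sin(\pi/\beta)}A^{-1/\beta}$ as $A\to\infty$ follows from $\int_0^\infty(1+w^\beta)^{-1}dw=\tfrac{\pi/\beta}{\sin(\pi/\beta)}$, which converges exactly because $\beta>1$. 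For the radial part, the substitution $t=\tfrac{\zeta}{2}(r+r'-R)$ turns $(N-1)p(r)$ into a constant times $e^{\zeta(R-r)/2}\int e^{(2\alpha/\zeta)t}g(e^{\beta t})\,dt$; since $2\alpha/\zeta>1$ (equivalently $\zeta/\alpha<2$) the integrand behaves like $e^{(2\alpha/\zeta-1)t}$ and the integral is governed by its upper endpoint $t=\zeta r/2$, producing the factor $\tfrac{2\alpha}{2\alpha-\zeta}$. Collecting the two constants yields exactly $K$, so $(N-1)p(r)\to Ke^{\zeta(R-r)/2}$.

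Next I would identify $F$. Since $\lambda(r)=Ke^{\zeta(R-r)/2}$ is decreasing in $r$, for $t\ge K$ we have $\{\lambda(r)\le t\}=\{r\ge R-\tfrac{2}{\zeta}\log(t/K)\}$, and the exact radial tail $\Pro{r\ge s}=\tfrac{\cosh\alpha R-\cosh\alpha s}{\cosh\alpha R-1}$ evaluated at $s=R-\tfrac{2}{\zeta}\log(t/K)$ converges to $1-(K/t)^{2\alpha/\zeta}$, which is the claimed $F$. The passage from conditional Poisson convergence to the mixed Poisson limit for $D_u$ I would carry out by dominated convergence applied to $\Prb{D_u=k}=\Ex{\binom{N-1}{k}p(r)^k(1-p(r))^{N-1-k}}$, exploiting the boundedness of the Poisson weights. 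Finally, the power law is a consequence of the mixed-Poisson representation: $F$ has density $f(\lambda)=\tfrac{2\alpha}{\zeta}K^{2\alpha/\zeta}\lambda^{-(2\alpha/\zeta+1)}$ on $[K,\infty)$, and in $\Prb{D_u=k}=\int_K^\infty\tfrac{\lambda^k e^{-\lambda}}{k!}f(\lambda)\,d\lambda$ the Poisson weight, regarded as a density in $\lambda$, concentrates around $\lambda=k$ as $k\to\infty$; since $f$ is regularly varying it is asymptotically constant over the bulk, so the integral is asymptotic to $f(k)=\tfrac{2\alpha}{\zeta}K^{2\alpha/\zeta}k^{-(2\alpha/\zeta+1)}$.

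I expect the main obstacle to be the rigorous control of the error in the distance approximation across the whole integration region, in particular near $\phi=0$ (where $d\to|r-r'|$ rather than $-\infty$) and for atypically small $r$, and, relatedly, making the Poisson mixing step uniform enough to integrate over $r$, since $\lambda(r)\to\infty$ as $r\to 0$ even though such vertices are exponentially rare.
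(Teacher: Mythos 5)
Your proposal is correct and follows essentially the same route as the paper: condition on the radial position of $u$, compute the angular average of the connection probability (yielding the constant $\tfrac{2}{\beta\sin(\pi/\beta)}$ via $\int_0^\infty(1+w^\beta)^{-1}dw$, exactly the paper's Lemma~\ref{lem:AngleAv}), then the radial average (producing $\tfrac{2\alpha}{2\alpha-\zeta}$, the paper's Lemma~\ref{lem:IndExp}), and finally pass from the conditional binomial to a mixed Poisson limit with mixing law $F$. The only cosmetic divergences are that the paper controls the Poisson approximation by an explicit total-variation bound after truncating at $t_u\le R/2-\omega(N)$ (which is how it resolves the issue you flag about small $r$), and it derives the power-law tail by explicit Gamma-function asymptotics rather than by concentration of the Poisson kernel around $\lambda=k$.
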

We also show a law of large numbers for the fraction of vertices of any given degree in $\G (N;\zeta, \alpha, \beta)$.
\begin{theorem} \label{thm:Concentration} 
For any $k$ let $N_k$ denote the number of vertices of degree $k$ in $\G (N;\zeta, \alpha, \beta)$. Let $\beta > 1$ and 
$0 < \zeta/\alpha < 2$. For any fixed integer $k\geq 0$, we have 
$$ {N_k \over N} \stackrel{p}\rightarrow \Prb {\MP(F) = k},$$
as $N \rightarrow \infty$.
\end{theorem}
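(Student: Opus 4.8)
The plan is to establish the claim by the second moment method, using Theorem~\ref{thm:DegreeDistribution} to pin down the first moment. Write $N_k = \sum_{u \in \V} \mathbf{1}[D_u = k]$. Since the points are i.i.d., the variables $D_u$ are identically distributed, and the convergence $D_u \stackrel{d}\to \MP(F)$ of integer-valued variables yields the pointwise convergence $\Prb{D_u = k} \to \Prb{\MP(F)=k}$; hence $\Ex{N_k}/N = \Prb{D_1=k} \to \Prb{\MP(F)=k}$. By Chebyshev's inequality it then suffices to prove $\Var(N_k) = o(N^2)$. Expanding and using exchangeability of the off-diagonal pairs,
\begin{equation*}
\Var(N_k) = \sum_{u}\Var\!\left(\mathbf{1}[D_u=k]\right) + N(N-1)\,\mathrm{Cov}\!\left(\mathbf{1}[D_1=k],\mathbf{1}[D_2=k]\right) \le N + N^2\,\bigl|\mathrm{Cov}\!\left(\mathbf{1}[D_1=k],\mathbf{1}[D_2=k]\right)\bigr|,
\end{equation*}
so the whole problem reduces to showing that the two-vertex covariance is $o(1)$.

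I would next apply the law of total covariance with respect to the $\sigma$-algebra $\mathcal{F}$ generated by the positions of all $N$ points. Conditionally on $\mathcal{F}$ all edge indicators are independent Bernoulli variables, and the only variable shared between $D_1 = e_{12} + \sum_{j\ge 3} e_{1j}$ and $D_2 = e_{12} + \sum_{j\ge 3} e_{2j}$ is $e_{12}$. Writing $p=p_{1,2}$ for the conditional edge probability and $a_m, b_m$ for the conditional laws of $\sum_{j\ge3}e_{1j}$ and $\sum_{j\ge3}e_{2j}$, a direct expansion over the two values of $e_{12}$ gives the exact identity
\begin{equation*}
\mathrm{Cov}\!\left(\mathbf{1}[D_1=k],\mathbf{1}[D_2=k]\,\middle|\,\mathcal{F}\right) = p(1-p)\,(a_k - a_{k-1})(b_k - b_{k-1}),
\end{equation*}
whose absolute value is at most $p_{1,2}$. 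Taking expectations, the first part of the total covariance is bounded by $\Ex{p_{1,2}} = \Prb{e_{12}=1} = O(1/N)$, because the average degree in the cold regime is bounded (so $\Ex{D_1} = (N-1)\Prb{e_{12}=1} = O(1)$, the finiteness of the limiting mean being guaranteed by $\zeta/\alpha<2$). Thus this contribution vanishes.

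It remains to control the second part, $\mathrm{Cov}(g_1,g_2)$ with $g_i := \Prb{D_i = k \mid \mathcal{F}}$, which carries the genuine dependence since $g_1$ and $g_2$ are functions of the \emph{same} pool of points $P_3,\dots,P_N$. Here I would condition on the radii $r_1,r_2$ (which are independent) and argue that, given $r_i$, the variable $g_i$ concentrates in probability on the deterministic value $h(r_i):=\Prb{\mathrm{Poi}(\lambda(r_i))=k}$. This relies on two facts that are available from the proof of Theorem~\ref{thm:DegreeDistribution}: conditionally on $\mathcal{F}$ the degree $D_i$ is a sum of independent Bernoullis with uniformly small parameters, so $g_i$ is well approximated by a Poisson point mass with parameter $\Lambda_i=\sum_j p_{i,j}$; and $\Lambda_i \to \lambda(r_i)$ in probability by the law of large numbers over the i.i.d.\ points. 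As the $g_i$ are bounded, convergence in probability upgrades to $L^2$, so $g_1 g_2 \to h(r_1)h(r_2)$ in $L^1$; since $r_1\perp r_2$ we get $\Ex{h(r_1)h(r_2)}=\Ex{h(r_1)}\Ex{h(r_2)}$ while $\Ex{g_i}\to\Ex{h(r_i)}$, and therefore $\mathrm{Cov}(g_1,g_2)\to 0$.

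The main obstacle is the concentration step for $g_i$, namely showing that the Poisson–binomial parameter $\Lambda_i=\sum_j p_{i,j}$ concentrates around a deterministic function of the single radius $r_i$ tightly enough to pass to the limit inside the covariance. The delicate feature is that points near the origin connect to almost everything and hence make an outsized contribution to $\Lambda_i$, yet occur with very small probability; bounding $\Var(\Lambda_i \mid r_i)$ therefore invokes the same radial tail estimates that underlie Theorem~\ref{thm:DegreeDistribution}, and one must verify that these render the atypical radii negligible. Everything else is routine bookkeeping with the explicit conditional covariance identity and the independence of the two radii.
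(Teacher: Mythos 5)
Your reduction of the theorem to $\mathrm{Cov}\left(\mathbf{1}[D_1=k],\mathbf{1}[D_2=k]\right)=o(1)$ via Chebyshev is exactly the paper's strategy (the paper derives Theorem~\ref{thm:Concentration} from the asymptotic independence of degrees, Theorem~\ref{thm:Correlations}, plus Chebyshev), and your treatment of the shared-edge term through the exact identity $p(1-p)(a_k-a_{k-1})(b_k-b_{k-1})$ is correct and clean. The gap is in the step you yourself flag as the main obstacle: the claim that, given $r_i$, the Poisson parameter $\Lambda_i=\sum_{j}p_{i,j}$ converges in probability to a deterministic $\lambda(r_i)$ ``by the law of large numbers.'' This is false. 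Conditionally on $t_i$, the summands $p_{i,j}$ are i.i.d.\ with mean $\mu=\Theta\left(e^{\zeta t_i/2}/N\right)$, but their second moment is \emph{not} $O(1/N^2)$: integrating $p_{i,j}^2=\left(CA_{i,j}^{\beta}\sin^{\beta}(\theta/2)+1\right)^{-2}$ over the angle gives $\Theta\left(A_{i,j}^{-1}\right)$, the same order as the first moment, because a point landing at angular distance $O\left(A_{i,j}^{-1}\right)$ from $v_i$ (probability $\Theta(1/N)$) contributes $\Theta(1)$ to $\Lambda_i$. Hence $\Var\left(\Lambda_i\mid t_i\right)=(N-1)\Var\left(p_{i,j}\mid t_i\right)=\Theta\left(e^{\zeta t_i/2}\right)$, which is of the same order as $\Ex{\Lambda_i\mid t_i}$ and does not vanish: in the cold regime $\Lambda_i$ is a genuinely non-degenerate random variable given $r_i$ (roughly, a Poisson number of near points each contributing order one), so $g_i=\Prb{D_i=k\mid\mathcal{F}}$ does not concentrate on $h(r_i)$ and your $L^2$ argument for $\mathrm{Cov}(g_1,g_2)\to 0$ collapses. (Note this is consistent with Lemma~\ref{lem:PoissonApx}: there the Poissonization is done conditioning only on $t_u$, where the indicators $I_{uv}$ are i.i.d.\ with the small parameter $\Prb{I_{uv}=1\mid t_u}$, not conditioning on all positions.)

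What actually makes $\mathrm{Cov}(g_1,g_2)$ small is not that each $g_i$ is nearly deterministic, but that $g_1$ and $g_2$ are (up to negligible corrections) functions of \emph{disjoint} collections of points. This is the content of the paper's proof of Theorem~\ref{thm:Correlations}: each vertex $v_i$ is assigned a ``vital area'' $A_{v_i}$ of angular width $\omega(N)A_{v_i,w}^{-1}$; w.h.p.\ these areas are pairwise disjoint (Claim~\ref{clm:DisjointAreas}), w.h.p.\ no neighbour of $v_i$ lies outside $A_{v_i}$ (Lemma~\ref{lem:B1Apx}), and conditional on the positions of $v_1,\dots,v_m$ the events ``$w\in A_{v_i}$ and $w\sim v_i$'' are, across $w$, independent multinomial trials with disjoint cells, so the joint law of the degrees factorizes into a product of Poisson-type probabilities. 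A soft covariance bound without this geometric localization will not suffice: a naive Efron--Stein estimate gives only $|\mathrm{Cov}(g_1,g_2)|=O(1)$, since resampling one point perturbs each $g_i$ by $\Theta(1/\sqrt{N})$ in $L^2$; one must use that the same point essentially never perturbs both $g_1$ and $g_2$. So your skeleton is salvageable, but the concentration step must be replaced by the disjoint-vital-areas argument (or an equivalent localization).
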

The distribution of the degree of a given vertex changes abruptly when $\beta \leq 1$. 
For a vertex $v \in \V$, if $r_v$ is the distance $v$ from the origin, then we set $t_v = R - r_v$ -- 
we call this quantity the \emph{type} of vertex $v$. We will show that a.a.s. all vertices have their types less than $\zeta R/(2\alpha) $ 
(cf. Corollary~\ref{cor:EffectiveArea}).  
When $\beta = 1$, the degree of any vertex conditional on its type is binomially distributed with expected value proportional 
to $R$. Let $\Bin (n,p)$ denote the binomial distribution with parameters $n,p$. 
For a random variable $X$ we write $X\stackrel{\Delta}{=} (1\pm \eps) \Bin (n,p)$ to denote the fact that the distribution of $X$ is stochastically 
between two random variables distributed as $\Bin (n, (1- \eps)p)$ and $\Bin (n, (1+\eps)p)$, respectively. 
\begin{theorem} \label{thm:CriticalRegime}
Let $\beta = 1$ and $0 < \zeta/\alpha < 2$. For any vertex $u \in \V$, if $t_u \leq \zeta R/(2\alpha)$, then for any $\eps >0$
$$ D_u \stackrel{\Delta}{=} (1\pm \eps) \Bin \left(N-1, (R-t_u)~{Ke^{\zeta t_u /2}\over N} \right),$$
where $K={1\over \pi}~{2\alpha \zeta \over 2\alpha -\zeta}$.
\end{theorem}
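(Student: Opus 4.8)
The plan is to condition on the position of $u$, reduce the claim to an estimate of a single edge probability, and then obtain that estimate by evaluating a double (radial times angular) integral in which the hyperbolic distance is replaced by its asymptotic form.

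First I would fix the type $t_u$ of $u$, equivalently its radial coordinate $r_u = R - t_u$. Conditionally on $t_u$, the remaining $N-1$ vertices are placed i.i.d.\ with uniform angle and radial density $\rho$, and, given all positions, the edges appear independently. Hence the indicators $e_{u,v}$, $v \neq u$, are i.i.d.\ Bernoulli variables and $D_u \mid t_u \sim \Bin(N-1, p_u)$ \emph{exactly}, where
$$ p_u = \Pro{u \sim v} = \int_0^R \rho(r)\left(\frac{1}{2\pi}\int_0^{2\pi}\frac{d\theta}{\exp\!\left(\tfrac{\zeta}{2}\bigl(d(u,v)-R\bigr)\right)+1}\right)dr, $$
with $\theta$ the (uniform) relative angle between $u$ and $v$ and $r$ the radial coordinate of $v$. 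Since $\Bin(n,p)$ is stochastically increasing in $p$, it suffices to show that for every fixed $\eps>0$ and all large $N$ one has $(1-\eps)q \le p_u \le (1+\eps)q$, where $q = (R-t_u)\,K e^{\zeta t_u/2}/N$; the required stochastic-domination statement then follows from the monotone coupling of binomials.

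The core is therefore the asymptotic evaluation of $p_u$. The hyperbolic law of cosines gives $\cosh\zeta d = \cosh\zeta r_u \cosh\zeta r - \sinh\zeta r_u \sinh\zeta r \cos\theta$, which for large $r_u,r$ yields $\exp(\tfrac{\zeta}{2}(d-R)) = (1+o(1))\,e^{\frac{\zeta}{2}(r-t_u)}\sin(\theta/2)$ after using $r_u = R-t_u$ and $\tfrac{\zeta}{2}R = \log N$. Under this approximation the inner angular average becomes $\frac{1}{\pi}\int_0^\pi (A\sin\phi + 1)^{-1}\,d\phi$ with $A = e^{\frac{\zeta}{2}(r-t_u)}$, which I would evaluate in closed form as $\frac{2}{\pi\sqrt{A^2-1}}\operatorname{arccosh}A$ for $A>1$, asymptotic to $\frac{\zeta(r-t_u)}{\pi}e^{-\frac{\zeta}{2}(r-t_u)}$. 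Inserting $\rho(r)=(1+o(1))\alpha e^{\alpha(r-R)}$ and integrating in $r$ leaves, up to constants and the factor $1/N$ coming from $e^{-\zeta R/2}$, an integral of the shape $\int_{t_u}^R (r-t_u)e^{(\alpha-\zeta/2)r}\,dr$. Because $\alpha - \zeta/2 > 0$ under the hypothesis $\zeta/\alpha < 2$, this is dominated by $r$ near $R$, and an integration by parts gives $(1+o(1))\frac{R-t_u}{\alpha-\zeta/2}e^{(\alpha-\zeta/2)R}$. Collecting the prefactors produces $p_u = (1+o(1))\frac{2\alpha\zeta}{\pi(2\alpha-\zeta)}(R-t_u)e^{\zeta t_u/2}/N = (1+o(1))q$, matching $K=\frac{1}{\pi}\frac{2\alpha\zeta}{2\alpha-\zeta}$.

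The hard part will be the error control hidden in the ``$(1+o(1))$'', which must moreover be uniform over $t_u$ in the allowed range. I would split the domain into a bulk ($r$ bounded away from $0$ and $\theta$ away from $0$ and $2\pi$) and the complementary regions, and argue that the latter are negligible compared with $q$: the contribution of small radii, in particular $r \le t_u$ where $A \le 1$, is at most $\int_0^{t_u}\rho(r)\,dr = O(e^{-\alpha(R-t_u)})$, which is $o(q)$ precisely because $t_u \le \zeta R/(2\alpha)$ forces $R-t_u$ to grow linearly in $R$; the contribution of extreme angles is controlled by the small angular measure near $\theta\in\{0,2\pi\}$ together with the trivial bound $p_{u,v}\le 1$. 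The most delicate point is to make the law-of-cosines approximation uniform on the dominant range $r = R-O(1),\dots,R$, which I would do by carrying the exact relation through explicit two-sided bounds of the form $e^{\zeta d/2}=e^{\zeta(r_u+r)/2}\sin(\theta/2)\bigl(1+O(e^{-\zeta r_u})+O(e^{-\zeta r})\bigr)$ and checking that the correction factor is uniformly $1+o(1)$ there. Given the earlier structural results, in particular that a.a.s.\ all types lie below $\zeta R/(2\alpha)$, these estimates are routine but lengthy; all the conceptual content sits in the exact binomial reduction together with the closed-form angular integral.
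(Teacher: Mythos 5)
Your proposal is correct and follows essentially the same route as the paper: condition on $t_u$, reduce to an exact $\Bin(N-1,p_u)$ via the i.i.d.\ indicators, and evaluate $p_u$ by first averaging over the relative angle using the law-of-cosines asymptotics for $d(u,v)$ (obtaining the $\ln A_{u,v}/A_{u,v}$ behaviour, which is the paper's Lemma~\ref{lem:AngleAv} for $\beta=1$) and then integrating over the radial coordinate, with the same cut-offs for small $R-t_u-t_v$ and small relative angle. The only cosmetic difference is that you evaluate the angular integral in closed form via $\operatorname{arccosh}$ rather than by the paper's change of variables and Claim~\ref{clm:MainIntegral}, which yields the same asymptotics.
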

When $0 < \beta <1$, the expected degree grows polynomially in $N$. More precisely, the following holds. 
\begin{theorem} \label{thm:HotRegime} 
Let $0 < \beta < 1$ and $0 < \zeta/\alpha < 2$. For any vertex $u \in \V$, if $t_u \leq \zeta R/(2\alpha)$, then for any $\eps >0$
$$ D_u \stackrel{\Delta}{=} (1\pm \eps) \Bin \left(N-1, K\left( {e^{\zeta t_u /2}\over N}\right)^\beta \right),$$
where $K={1\over \sqrt{\pi}}~{2\alpha \over 2\alpha -\beta \zeta}~{\Gamma \left({1-\beta \over 2} \right) \over 
\Gamma\left(1 -{\beta \over 2} \right)}$.
\end{theorem}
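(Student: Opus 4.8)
The plan is to condition on the position of $u$ and exploit the conditional independence of the edges. Write $r_u = R - t_u$ for the radial coordinate of $u$. Conditionally on the position of $u$ (equivalently, by rotational invariance, on $t_u$), the remaining $N-1$ vertices are sampled independently from the law with radial density $\rho$ and uniform angle, and each is joined to $u$ independently with the prescribed probability. Hence the event that a fixed other vertex $v$ is a neighbour of $u$, after integrating out both the position of $v$ and the edge coin, has probability
$$ \bar p_u := \Ex{\frac{1}{\exp\!\left(\beta\frac{\zeta}{2}(d(u,v)-R)\right)+1}\,\Big|\,u}, $$
and these events are i.i.d. across the $N-1$ vertices. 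Therefore, conditionally on $t_u$, we have \emph{exactly} $D_u \sim \Bin(N-1,\bar p_u)$. Since $\Bin(n,p)$ is stochastically increasing in $p$, the theorem reduces to the single estimate
$$ \bar p_u = (1\pm\eps)\,K\left(\frac{e^{\zeta t_u/2}}{N}\right)^{\!\beta}. $$

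To evaluate $\bar p_u$ I would pass to polar coordinates; since the angle of $v$ is uniform and the distance depends only on the angular difference $\theta$, symmetry reduces the expectation to $\bar p_u = \frac{1}{\pi}\int_0^{R}\rho(r_v)\int_0^{\pi}p_{u,v}\,d\theta\,dr_v$. Writing $t' = R - r_v$ for the type of $v$ and using the law of cosines, for $r_u,r_v$ large one has $d(u,v) = r_u + r_v + \frac{2}{\zeta}\log\sin(\theta/2) + \mathrm{error}$, so that with $e^{\zeta R/2}=N$,
$$ \exp\!\left(\beta\tfrac{\zeta}{2}(d-R)\right) = (1+o(1))\,N^{\beta}\,e^{-\beta\frac{\zeta}{2}(t_u+t')}\sin^{\beta}(\theta/2). $$
The place where the hypothesis $\beta<1$ enters is the angular integral: with $A := N^{\beta}e^{-\beta\frac{\zeta}{2}(t_u+t')}\to\infty$,
$$ \int_0^{\pi}\frac{d\theta}{1+A\sin^{\beta}(\theta/2)} = (1+o(1))\,\frac{1}{A}\int_0^{\pi}\frac{d\theta}{\sin^{\beta}(\theta/2)} = (1+o(1))\,\frac{1}{A}\,\frac{\sqrt{\pi}\,\Gamma\!\left(\tfrac{1-\beta}{2}\right)}{\Gamma\!\left(1-\tfrac{\beta}{2}\right)}, $$
where convergence of the middle integral uses precisely $\beta<1$, its value following from the Beta-function identity $\int_0^{\pi}\sin^{-\beta}(\theta/2)\,d\theta = \sqrt{\pi}\,\Gamma((1-\beta)/2)/\Gamma(1-\beta/2)$. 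Substituting $1/A = N^{-\beta}e^{\beta\frac{\zeta}{2}t_u}e^{\beta\frac{\zeta}{2}t'}$, approximating $\rho(r_v)\,dr_v \approx \alpha e^{-\alpha t'}\,dt'$, and integrating in $t'$ gives
$$ \alpha\int_0^{R}e^{(\beta\frac{\zeta}{2}-\alpha)t'}\,dt' = (1+o(1))\,\frac{2\alpha}{2\alpha-\beta\zeta}, $$
the convergence of this improper integral being exactly the content of $0<\zeta/\alpha<2$, which forces $\alpha>\beta\zeta/2$. Collecting the factor $N^{-\beta}e^{\beta\frac{\zeta}{2}t_u}$, the prefactor $\frac{1}{\pi}\cdot\sqrt{\pi}\,\Gamma((1-\beta)/2)/\Gamma(1-\beta/2)=\frac{1}{\sqrt{\pi}}\,\Gamma((1-\beta)/2)/\Gamma(1-\beta/2)$, and $\frac{2\alpha}{2\alpha-\beta\zeta}$ reproduces $K\,(e^{\zeta t_u/2}/N)^{\beta}$ with the stated constant.

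The main obstacle will be making the distance approximation and the interchange of integrals rigorous uniformly in the variables, since $d \approx r_u + r_v + \frac{2}{\zeta}\log\sin(\theta/2)$ degrades both when $\theta$ is very small (where in fact $d \approx |t_u-t'|$) and when $r_v$ is small (the point $v$ near the origin). I would handle this by splitting the domain: for $\theta$ below a threshold $\theta_0 \asymp A^{-1/\beta}$ the integrand is $O(1)$ but the angular measure is so small that, weighted against the exponentially decaying radial density, its total contribution is of smaller order, a factor $(e^{\zeta(t_u+t')/2}/N)^{1-\beta}$ relative to the main term; and for $t'$ close to $R$ the density $e^{-\alpha t'}$ together with $\alpha>\beta\zeta/2$ renders the tail of the $t'$-integral negligible. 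The hypothesis $t_u\le \zeta R/(2\alpha)$, together with the a.a.s.\ bound on types from Corollary~\ref{cor:EffectiveArea}, ensures $r_u$ is of order $R$ so that these error estimates hold uniformly and the multiplicative $(1\pm\eps)$ form of the conclusion follows.
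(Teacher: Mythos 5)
Your proposal is correct and follows essentially the same route as the paper: conditioning on $t_u$ makes the $N-1$ adjacency events i.i.d.\ so that $D_u$ is exactly binomial, and the success probability is computed by first averaging over the angle (the paper's Lemma~\ref{lem:AngleAv}, case $\beta<1$, including the same Beta-function identity and the same small-angle cutoff near $A_{u,v}^{-1}$) and then over the type of the second vertex (Lemma~\ref{lem:IndExp} and Corollary~\ref{cor:IndExpAng}), with the region $R-t_u-t_v=O(\omega(N))$ discarded by a crude probability-mass bound exactly as in \eqref{eq:IndExp2nd}. The error analysis you outline matches the paper's, so there is nothing substantive to add.
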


\subsection{The disc model vs the binomial model} 

The disc model can be viewed as a limiting case of the binomial model as $\beta \rightarrow \infty$. Assume that the positions 
of the vertices in $\D$ have been realized. If $u,v \in V_N$ are such that $d(u,v) < R$, then when $\beta \rightarrow \infty$ the 
probability that $u$ and $v$ are adjacent tends to 1; however, if $d(u,v)>R$, the this probability converges to 0 as $\beta$ grows. 
In other words, the disc model is the ``frozen" version ($T=0$) of the binomial model. 
Rigorous results for the disc model were obtained by Gugelmann et al.~\cite{ar:Gugel}, regarding their degree sequence as well as 
the clustering coefficient.  

\subsection{Hyperbolic random graphs as a model for social networks} 

We will now discuss the potential of using random graphs on spaces of negative curvature as a model for real-world networks. 
The typical features of real-world networks can be summarised as follows: 
\begin{enumerate}
\item[1.] they are \emph{large}, that is, they contain thousands or millions of nodes;
\item[2.] they are \emph{sparse}, that is, the number of their edges is proportional to the number of nodes; 
\item[3.] they exhibit the \emph{small world phenomenon}; most pairs of vertices are within a short distance from each other;
\item[4.] a significant amount of \emph{clustering} is present. The latter means that two nodes of the network that have a common 
neighbour are somewhat more likely to be connected with each other;  
\item[5.] their degree sequence follows a \emph{power-law} distribution. 
\end{enumerate}
See the book of Chung and Lu~\cite{ChungLuBook+} for a more detailed exposition of these properties. 

Over the last decade a number of models have been developed whose aim was to capture these features. 
Among the first such models is the \emph{preferential attachment model}. 
This is a class of models of randomly growing graphs whose aim is to capture a 
basic feature of real-world networks: nodes which are already popular tend to become more popular as the network grows. 
These models were first defined by Albert and Barab\'asi~\cite{BarAlb} and subsequently defined and studied rigorously 
by Bollob\'as and Riordan (see for example~\cite{DegSeq},~\cite{Diam}).

Another extensively studied model was defined by Chung and Lu~\cite{ChungLu1+}, \cite{ChungLuComp+}. 
Here every vertex has a weight which effectively corresponds to its expected degree and every two vertices are joined independently 
of every other pair with probability that is proportional to the product of their weights. 
When the weights are set suitably, then the resulting random graph has power-law degree distribution. 

All these models are nonetheless insufficient in the 
sense that none of them succeeds in incorporating all the above features. 
For example the Chung and Lu model although it exhibits a power law 
degree distribution and average distance of order $O(\log \log N)$, when the exponent of the power law is between 2 and 3 
(see~\cite{ChungLu1+}) (with $N$ being the number of nodes of the random network) it is locally tree-like around a typical vertex. 
Thus, for the majority of the vertices their neighbourhoods form an independent set. This is also the situation
regarding the Barab\'asi-Albert model. 
Thus, it seems that there is a ``missing link" in the definitions of these models which is a key ingredient to the process of creating 
a social network. 
It seems plausible that the factor which is missing in these models is the \emph{hierarchical structure} of a social network. 
To be more precise, the hierarchies are not among nodes, but more importantly on the level of groups of nodes. 

Real-world networks consist of heterogeneous nodes, which can be classified into groups. In turn, these groups
can be classified into larger groups which consist of smaller subgroups and so on. For example, if we consider the network of citations, 
whose set of nodes is the set of research papers and there is a link from one paper to another if one cites the other, there is a 
natural classification of the nodes according to the scientific fields each paper belongs to (see for example~\cite{Boerner04+}). In the
case of the network of web pages, a similar classification can be considered in terms of the similarity between two web pages. 
That is, the more similar two web pages are, the more likely it is that there exists a hyperlink between them (see~\cite{Mencezer02+}).  

This classification can be approximated by tree-like structures representing the hidden hierarchy of the network. 
The tree-likeness suggests that in fact the geometry of this hierarchy is \emph{hyperbolic}. As we have already seen in the above 
definitions, the volume growth in the hyperbolic space is exponential which is also the case, for example, 
when one considers a $k$-ary tree, that is, a rooted tree where every vertex has $k$ children. 
Let us consider the Poincar\'e disc model. If we place the root of an infinite $k$-ary tree at the centre of 
the disc, then the hyperbolic metric provides the necessary room to embed the tree into the disc so that every edge has unit length
in the embedding. See also~\cite{ar:Krioukov} for a related discussion.

In this contribution, however, we have only explored the degree sequence of random graphs on a hyperbolic space showing that 
in the sparse regime, they exhibit a power-law degree distribution with exponent that depends on the curvature of the underlying space. 
It remains to show that they also exhibit the small world phenomenon, proving that the typical distance between two randomly 
chosen vertices is small, for example $O(\log \log N)$.
Regarding the presence of clustering, Krioukov et al.~\cite{ar:Krioukov} have shown with the use of non-rigorous arguments that these 
random graphs do exhibit clustering which can be adjusted through the parameter $\beta$. This dependence also remains to be quantified 
rigorously. For the disc model, this existence of clustering has been verified by Gugelmann et al.~\cite{ar:Gugel}.   

\subsection{Outline} 
We begin our analysis with some preliminary results which will be used throughout our proofs. In Section~\ref{sec:distribution}, we prove 
Theorem~\ref{thm:DegreeDistribution} and we continue in Section~\ref{sec:Corr} with the analysis of the asymptotic correlation 
of the degrees of finite collections of vertices and the proof of Theorem~\ref{thm:Correlations}. Its proof immediately yields 
Theorem~\ref{thm:Concentration} with the use of Chebyschev's inequality. 

\section{Preliminaries} \label{sec:prelim}
  
The next lemma shows that the type of a vertex is essentially exponentially distributed. 
\begin{lemma} \label{lem:TypeDistr}
For any $v \in \V$ we have 
$$ \Prb { t_v \leq x} = 1 - e^{-\alpha x} + O \left( N^{-2\alpha /\zeta} \right),$$
uniformly for all $0\leq x \leq R$.
\end{lemma}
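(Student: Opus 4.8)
The plan is to compute the cumulative distribution function of $t_v$ exactly and then expand it asymptotically in $N$. Since $t_v = R - r_v$ and $r_v$ has density $\rho$ supported on $[0,R]$, I would first rewrite the event in terms of $r_v$ and integrate:
$$\Prb{t_v \leq x} = \Prb{r_v \geq R-x} = \int_{R-x}^{R} \alpha\,\frac{\sinh \alpha r}{\cosh \alpha R - 1}\,dr.$$
The integrand has the elementary antiderivative $\cosh \alpha r/(\cosh \alpha R - 1)$, so the integral evaluates in closed form to
$$\Prb{t_v \leq x} = \frac{\cosh \alpha R - \cosh \alpha (R-x)}{\cosh \alpha R - 1}.$$
This exact identity is the backbone of the argument; everything after it is asymptotic bookkeeping.

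Next I would pass to exponentials and use the normalisation $N = e^{\zeta R/2}$, which gives $e^{\alpha R} = N^{2\alpha/\zeta}$, hence $e^{-\alpha R} = N^{-2\alpha/\zeta}$ and $e^{-2\alpha R} = N^{-4\alpha/\zeta}$. Writing $\cosh y = \tfrac12(e^y + e^{-y})$ and factoring $\tfrac12 e^{\alpha R}$ out of both numerator and denominator turns the ratio into
$$\Prb{t_v \leq x} = \frac{1 - e^{-\alpha x} + e^{-2\alpha R} - e^{-\alpha(2R-x)}}{1 - 2e^{-\alpha R} + e^{-2\alpha R}}.$$
The denominator is $1 + O(N^{-2\alpha/\zeta})$, and the main term of the numerator is the desired $1 - e^{-\alpha x}$.

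The only step that needs genuine care --- and the one I would treat as the main obstacle --- is verifying that the remaining terms are $O(N^{-2\alpha/\zeta})$ \emph{uniformly} over the whole range $0 \leq x \leq R$. The delicate term is the cross term $e^{-\alpha(2R-x)} = e^{-2\alpha R}e^{\alpha x}$, which grows with $x$; here I would invoke the constraint $x \leq R$ (forced by $r_v \geq 0$) to bound $e^{\alpha x} \leq e^{\alpha R}$, so that $e^{-\alpha(2R-x)} \leq e^{-\alpha R} = N^{-2\alpha/\zeta}$, while $e^{-2\alpha R} = N^{-4\alpha/\zeta}$ is smaller still. With these bounds the numerator is $1 - e^{-\alpha x} + O(N^{-2\alpha/\zeta})$ uniformly in $x$, and since $1 - e^{-\alpha x} \in [0,1]$ is bounded, expanding $1/(1 + O(N^{-2\alpha/\zeta})) = 1 + O(N^{-2\alpha/\zeta})$ and multiplying out yields $\Prb{t_v \leq x} = 1 - e^{-\alpha x} + O(N^{-2\alpha/\zeta})$ with the implied constant independent of $x$, as required.
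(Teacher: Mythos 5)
Your proof is correct and follows essentially the same route as the paper: compute the exact CDF as $\bigl(\cosh\alpha R-\cosh\alpha(R-x)\bigr)/(\cosh\alpha R-1)$, pass to exponentials, and use $x\le R$ to bound the cross term $e^{-\alpha(2R-x)}\le e^{-\alpha R}=N^{-2\alpha/\zeta}$, which is exactly the key observation in the paper's argument. The only difference is cosmetic (you expand the single fraction at once, the paper treats the two cosh ratios separately), and your handling of uniformity is, if anything, slightly more explicit.
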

\begin{proof} 
We use the definition of $\rho (r)$ and write 
\begin{equation*} 
\begin{split} 
\Prb { t_v \leq x} &= \alpha \int_{R-x}^R {\sinh (\alpha r) \over \cosh (\alpha R) - 1} dr = {1\over \cosh (\alpha R) - 1}\left[ \cosh 
(\alpha R) - \cosh (\alpha (R-x)) \right]. 
%\\ & = 1 -O \left( N^{-2/\zeta} \right)  - e^{-x} \left( 1 - O \left( N^{-2/\zeta} \right) \right), 
\end{split}
\end{equation*}
Now, note that $\cosh (\alpha R) ={1\over 2}~e^{\alpha R}(1+o(1))= {1\over 2}~N^{2\alpha /\zeta}(1+o(1))$ and therefore 
$${\cosh (\alpha R) \over  \cosh (\alpha R) -1 } = 1 +O(N^{-2\alpha /\zeta}).$$ 
Also, 
\begin{equation*}
\begin{split}
{\cosh (\alpha (R-x)) \over \cosh (\alpha R) - 1} &= 
{\cosh (\alpha (R-x))\over \cosh (\alpha R)}\left( 1+ O(N^{-2\alpha/\zeta}) \right) \\
&= {e^{\alpha (R-x)} \over e^{\alpha R}}~
{1+ e^{-2\alpha (R-x)}\over 1+ e^{-2\alpha R}}~\left( 1+ O(N^{-2\alpha /\zeta}) \right) \\
&= e^{-\alpha x} \left( 1+ e^{-2\alpha (R-x)} \right)~\left( 1 - e^{-2\alpha R} + O(e^{-4\alpha R}) \right)~
 \left( 1+ O(N^{-2\alpha/\zeta}) \right).
\end{split}
\end{equation*}
But $e^{-x -2(R-x)}=e^{-2R +x}$ and $x \leq R$. Thus $e^{-\alpha x -2\alpha (R-x)} \leq e^{-\alpha R} = N^{-2\alpha /\zeta}$,
which implies the statement of the lemma.
\end{proof}
Now, let us set $x_0 =  \zeta R/(2\alpha)  + \omega (N)$, where $\omega (N) \rightarrow \infty$ as $N \rightarrow \infty$ is an arbitrarily slowly 
growing function.
The above lemma immediately yields the following corollary. 
\begin{corollary} \label{cor:EffectiveArea}
If $0< \zeta/\alpha < 2$, then a.a.s all vertices $v \in \V$ have $t_v \leq x_0$.
\end{corollary}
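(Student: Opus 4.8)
The plan is to apply a straightforward first-moment (union bound) argument over all $N$ vertices, feeding in the uniform estimate from Lemma~\ref{lem:TypeDistr}. The first step is to express $x_0$ in terms of $N$. Since $\zeta R = 2\log N$, we have
\begin{equation*}
\alpha x_0 = \alpha\left(\frac{\zeta R}{2\alpha} + \omega(N)\right) = \frac{\zeta R}{2} + \alpha\,\omega(N) = \log N + \alpha\,\omega(N),
\end{equation*}
and hence $e^{-\alpha x_0} = N^{-1} e^{-\alpha\,\omega(N)} = o(N^{-1})$, because $\omega(N) \to \infty$. This is exactly where the slowly growing term $\omega(N)$ earns its keep: without it one would only get $e^{-\alpha x_0} = N^{-1}$, which is too large for a union bound over $N$ vertices.

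Next I would invoke Lemma~\ref{lem:TypeDistr} with $x = x_0$ (note $x_0 \leq R$ for large $N$ since $\zeta/\alpha < 2$, so the estimate applies) to bound the probability that a single vertex is bad:
\begin{equation*}
\Prb{t_v > x_0} = 1 - \Prb{t_v \leq x_0} = e^{-\alpha x_0} + O\!\left(N^{-2\alpha/\zeta}\right) = o(N^{-1}) + O\!\left(N^{-2\alpha/\zeta}\right).
\end{equation*}
Because the marginal distribution of $t_v$ is the same for every $v \in \V$, taking a union bound gives that the expected number of vertices violating $t_v \leq x_0$ is at most
\begin{equation*}
N \cdot \Prb{t_v > x_0} = e^{-\alpha\,\omega(N)} + O\!\left(N^{1 - 2\alpha/\zeta}\right).
\end{equation*}

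The one substantive point — and the only place the hypothesis is used — is controlling the second term: the condition $0 < \zeta/\alpha < 2$ is equivalent to $2\alpha/\zeta > 1$, so that $1 - 2\alpha/\zeta < 0$ and $N^{1-2\alpha/\zeta} \to 0$. Together with $e^{-\alpha\,\omega(N)} \to 0$, the displayed expectation is $o(1)$, and Markov's inequality then bounds the probability that some vertex has type exceeding $x_0$ by this expectation, which is $o(1)$. Thus a.a.s.\ every $v \in \V$ satisfies $t_v \leq x_0$, as claimed. I do not anticipate any genuine obstacle here; the content is entirely in checking that the two error sources (the $\omega(N)$ slack against $e^{-\alpha x_0}$, and the curvature condition against the lemma's $O(N^{-2\alpha/\zeta})$ error) both survive multiplication by $N$.
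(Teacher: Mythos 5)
Your proof is correct and follows essentially the same route as the paper: apply Lemma~\ref{lem:TypeDistr} at $x=x_0$, note that $e^{-\alpha x_0}=N^{-1}e^{-\alpha\omega(N)}=o(N^{-1})$ and that $\zeta/\alpha<2$ makes the $O(N^{-2\alpha/\zeta})$ error $o(N^{-1})$ as well, then conclude by a first-moment/Markov argument over the $N$ vertices. Your write-up is in fact slightly more careful than the paper's, which has a small typo ($e^{-x_0}$ in place of $e^{-\alpha x_0}$).
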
  
\begin{proof}
Note that $x_0 = {1\over \alpha}~\log N + \omega (N)$. 
For a vertex $v \in \V$ applying Lemma~\ref{lem:TypeDistr}, we have 
$$ \Prb {t_v > x_0} = e^{-x_0} + O \left( N^{-2\alpha /\zeta} \right) = o\left( {1\over N}\right),$$
since $\zeta / \alpha < 2$.  The corollary follows from Markov's inequality.
\end{proof}
We will also need an estimate on the distance between two points in the case their relative angle is not too small (this is the 
typical case). 
\begin{lemma} \label{lem:Dist}
Assume that $0 < \zeta / \alpha < 2$.
Let $u, v$ be two distinct points in $\D$ such that $t_u, t_v \leq x_0$ and let $\theta_{u,v}$ denote their relative radius. 
Let also $\hat{\theta}_{u,v}:= \left( e^{-2\zeta (R-t_u)} + e^{-2\zeta (R-t_v)}\right)^{1/2}$.
%There exists a constant $C>0$ such that 
If  
$$\hat{\theta}_{u,v} \ll \theta_{u,v} \leq  \pi ,$$
then 
$$ d(u,v) = 2R - (t_u + t_v) + {2\over \zeta}~\log \sin \left( {\theta_{u,v} \over 2} \right) + 
O \left( \left( {\hat{\theta}_{u,v} \over \theta_{u,v}} \right)^2 \right),$$
uniformly for all $u, v$ with $t_u, t_v \leq x_0$.  
\end{lemma}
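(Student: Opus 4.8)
The plan is to apply the hyperbolic law of cosines and then isolate $d(u,v)$ by replacing each hyperbolic function with its dominant exponential. Write $r_u = R-t_u$ and $r_v=R-t_v$. The origin $O$ together with $u$ and $v$ spans a hyperbolic triangle whose angle at $O$ is $\theta_{u,v}$ and whose two sides through $O$ have lengths $r_u$ and $r_v$; hence in curvature $-\zeta^2$ the law of cosines reads
$$\cosh(\zeta\, d(u,v)) = \cosh(\zeta r_u)\cosh(\zeta r_v) - \sinh(\zeta r_u)\sinh(\zeta r_v)\cos\theta_{u,v}.$$

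First I would substitute $\cosh x = \tfrac12 e^x(1+e^{-2x})$ and $\sinh x = \tfrac12 e^x(1-e^{-2x})$, factor out $\tfrac12 e^{\zeta(r_u+r_v)}$, and simplify the resulting bracket with the half-angle identities $1-\cos\theta_{u,v}=2\sin^2(\theta_{u,v}/2)$ and $1+\cos\theta_{u,v}=2\cos^2(\theta_{u,v}/2)$. A short computation collapses the right-hand side to
$$\tfrac12 e^{\zeta(r_u+r_v)}\Bigl[\sin^2\tfrac{\theta_{u,v}}{2} + (e^{-2\zeta r_u}+e^{-2\zeta r_v})\cos^2\tfrac{\theta_{u,v}}{2} + e^{-2\zeta(r_u+r_v)}\sin^2\tfrac{\theta_{u,v}}{2}\Bigr],$$
in which the quantity $e^{-2\zeta r_u}+e^{-2\zeta r_v}$ is exactly $\hat{\theta}_{u,v}^2$. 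Writing the left-hand side as $\tfrac12 e^{\zeta d(u,v)}(1+e^{-2\zeta d(u,v)})$, taking logarithms, dividing by $\zeta$, and using $r_u+r_v=2R-(t_u+t_v)$, I obtain
$$d(u,v) = 2R-(t_u+t_v) + \tfrac{2}{\zeta}\log\sin\tfrac{\theta_{u,v}}{2} + \tfrac1\zeta\log\bigl(1+E\bigr) - \tfrac1\zeta\log\bigl(1+e^{-2\zeta d(u,v)}\bigr),$$
where $E$ denotes the ratio of the two correction terms in the bracket to $\sin^2(\theta_{u,v}/2)$. The first three summands already give the claimed leading terms.

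The hard part is to show that both residual logarithmic terms are $O\bigl((\hat\theta_{u,v}/\theta_{u,v})^2\bigr)$ uniformly. For $E$, since $\theta_{u,v}\le\pi$ gives $\cos^2(\theta_{u,v}/2)\le1$ and $\sin(\theta_{u,v}/2)\asymp\theta_{u,v}$, its dominant contribution is $\hat\theta_{u,v}^2\cot^2(\theta_{u,v}/2)=O\bigl((\hat\theta_{u,v}/\theta_{u,v})^2\bigr)$; the hypothesis $\hat\theta_{u,v}\ll\theta_{u,v}$ makes $E=o(1)$, so $\log(1+E)$ has the same order. For the second term I would feed back the leading-order estimate $e^{\zeta d(u,v)}\asymp e^{\zeta(r_u+r_v)}\sin^2(\theta_{u,v}/2)$ together with the arithmetic–geometric mean bound $e^{-2\zeta(r_u+r_v)}=e^{-2\zeta r_u}e^{-2\zeta r_v}\le\tfrac14\hat\theta_{u,v}^4$ to deduce $e^{-2\zeta d(u,v)}=O\bigl(\hat\theta_{u,v}^4/\sin^4(\theta_{u,v}/2)\bigr)=O\bigl((\hat\theta_{u,v}/\theta_{u,v})^4\bigr)$, which is absorbed into $O\bigl((\hat\theta_{u,v}/\theta_{u,v})^2\bigr)$. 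Since every estimate depends only on the ratio $\hat\theta_{u,v}/\theta_{u,v}$ and on the constraints $t_u,t_v\le x_0$, the bounds hold uniformly over all admissible $u,v$, which completes the argument.
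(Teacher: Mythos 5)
Your argument is correct and follows essentially the same route as the paper: the hyperbolic law of cosines, exponential expansion of the hyperbolic functions, identification of $\sin^2(\theta_{u,v}/2)$ as the dominant term in the bracket via $\hat\theta_{u,v}\ll\theta_{u,v}$ and the AM--GM/convexity bound $e^{-2\zeta(r_u+r_v)}\le\tfrac14\hat\theta_{u,v}^4$, followed by taking logarithms and feeding the leading estimate back in to control the $e^{-2\zeta d(u,v)}$ correction. Your uniform use of $\sin(\theta_{u,v}/2)\asymp\theta_{u,v}$ on $(0,\pi]$ neatly replaces the paper's case split between $\theta_{u,v}$ bounded away from $0$ and $\theta_{u,v}=o(1)$, but this is only a cosmetic difference.
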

\begin{proof} 
We begin with the hyperbolic law of cosines: 
\begin{equation*} \label{eq:CosinesLawInit}
\cosh (\zeta d(u,v)) = \cosh (\zeta (R- t_u)) \cosh (\zeta (R-t_v)) - \sinh (\zeta (R- t_u)) \sinh (\zeta (R-t_v)) \cos ( \theta_{u,v} ).
\end{equation*}
Since $t_u, t_v \leq x_0$, it follows that both $R-t_u, R-t_v \rightarrow \infty$ as $N \rightarrow \infty$. Thus the right-hand 
side of (\ref{eq:CosinesLawInit}) becomes:
\begin{equation*} 
\begin{split} 
&\cosh (\zeta (R- t_u)) \cosh (\zeta (R-t_v)) - \sinh (\zeta (R- t_u)) \sinh (\zeta (R-t_v)) \cos ( \theta_{u,v} ) = \\
& {e^{\zeta (2R- (t_u + t_v))} \over 4} \left( \left(1+e^{-2\zeta (R-t_u)}\right)\left(1+e^{-2\zeta (R-t_v)}\right) 
-\left(1-e^{-2\zeta (R-t_u)}\right)\left(1-e^{-2\zeta (R-t_v)}\right) \cos (\theta_{u,v}) \right) \\
& = {e^{\zeta (2R- (t_u + t_v))} \over 4} \left(1- \cos (\theta_{u,v}) + \left(1+ \cos (\theta_{u,v}) \right) 
\left( e^{-2\zeta (R-t_u)} +  e^{-2\zeta (R-t_v)}\right) + O\left( e^{-2\zeta (2R- (t_u + t_v))}\right)\right). 
\end{split}
\end{equation*}
By the convexity of the function $e^{-2\zeta x}$, we have 
\begin{equation} \label{eq:convexity}
e^{-\zeta (2R- (t_u + t_v))} = e^{-2\zeta {2R- (t_u + t_v)\over 2}} \leq {1\over 2} \left( e^{-2\zeta (R-t_u)} +  e^{-2\zeta (R-t_v)}\right) 
\leq \hat{\theta}_{u,v}^2. 
\end{equation} 
Thus, the previous estimate can be written as 
\begin{equation*}
\cosh (\zeta d(u,v)) = {e^{\zeta (2R- (t_u + t_v))} \over 4} \left(1- \cos (\theta_{u,v}) + \left(1+ \cos (\theta_{u,v}) \right) 
\hat{\theta}_{u,v}^2 + O\left( \hat{\theta}_{u,v}^4 \right)\right). 
\end{equation*}
If $\theta_{u,v}$ is bounded away from 0, then clearly $1-\cos(\theta_{u,v})$ dominates the expression in brackets. 
Now assume that $\theta_{u,v} = o(1)$.
It is a basic trigonometric identity that $1- \cos (\theta_{u,v}) = 2\sin^2 \left( {\theta_{u,v} \over 2} \right)$.  
Then $1- \cos (\theta_{u,v}) = {\theta_{u,v}^2 \over 2} (1-o(1))$. But the assumption that $\theta_{u,v} \gg \hat{\theta}_{u,v}$ 
again implies that also in this case $1-\cos(\theta_{u,v})$ dominates expression in brackets. Thus
\begin{equation} \label{eq:CosinesLaw} 
\begin{split} 
\cosh (\zeta d(u,v)) &= 
{e^{\zeta (2R- (t_u + t_v))} \over 4}~\left(1- \cos (\theta_{u,v})\right) ~\left(1 + 
O \left( \left( {\hat{\theta}_{u,v} \over \theta_{u,v}} \right)^2 \right) \right) \\
&= {e^{\zeta (2R- (t_u + t_v))} \over 2}~\sin^2 \left( {\theta_{u,v} \over 2} \right)~\left(1 + 
O \left( \left( {\hat{\theta}_{u,v} \over \theta_{u,v}} \right)^2 \right) \right).
\end{split}
\end{equation}
Now, we take logarithms in (\ref{eq:CosinesLaw}) and divide both sides by $\zeta$ thus obtaining: 
\begin{equation} \label{eq:d(u,v)}
d(u,v) + {1 \over \zeta} \log \left( 1-e^{-2\zeta d(u,v)} \right) = 2R -t_u - t_v + {2\over \zeta} \log \sin \left( {\theta_{u,v} \over 2} \right)
+ O \left( \left( {\hat{\theta}_{u,v} \over \theta_{u,v}} \right)^2 \right).
\end{equation}
We now need to give an asymptotic estimate on $e^{-2\zeta d(u,v)}$. We derive this from (\ref{eq:CosinesLaw}) as well. 
For $N$ large enough, we have 
\begin{equation*} 
e^{\zeta d(u,v)} \geq {e^{\zeta (2R- (t_u + t_v))} \over 4}~\sin^2 \left( {\theta_{u,v} \over 2} \right) \geq 
{1 \over 32}~e^{\zeta (2R- (t_u + t_v))}~\theta_{u,v}^2 \stackrel{(\ref{eq:convexity})}{\geq}  {1\over 32}~ \left( {\theta_{u,v} \over 
\hat{\theta}_{u,v}} \right)^2,
\end{equation*}
from which it follows that 
$$  \left| \log \left( 1-e^{-2\zeta d(u,v)} \right) \right|  = O \left( \left( {\hat{\theta}_{u,v} \over \theta_{u,v}} \right)^4 \right).$$
Substituting this into (\ref{eq:d(u,v)}) completes the proof of the lemma.
\end{proof}
% The following lemma shows that when $\beta > 1$ the probability that two distinct points $u, v \in \D$ are joined, when averaged over all 
% possible angles between them, is essentially equal to $A_{u,v}^{-1}$. 
Let $\hat{p}_{u,v} = {1\over \pi}~\int_{0}^\pi p_{u,v} d \theta$ - this is the probability that two points $u$ and $v$ are connected by 
an edge, conditional on their types. For an arbitrary slowly growing function 
$\omega: \mathbb{N} \rightarrow \mathbb{N}$
we define $$\mathcal{D}_{R,\omega}^{(2)}= \{ u,v \in \D \ : t_v,t_u \leq x_0,\ R-t_u-t_v \geq \omega (N) \}.$$ 
Also, for two points $u, v \in \D$ we set  $A(t_u,t_v) = \exp \left({\zeta \over 2} \left(R- (t_u + t_v) \right) \right)$. 
The following lemma gives an asymptotic estimate on $\hat{p}_{u,v}$, for all values of $\beta$, in terms of $A_{t_u,t_v}$. 
\begin{lemma} \label{lem:AngleAv}
Let $\beta >0$. There exists a constant $C_{\beta}>0$ such that uniformly for all $u,v \in \mathcal{D}_{R,\omega}^{(2)}$
we have  
$$ \hat{p}_{uv} = \begin{cases}
(1+o(1)){C_{\beta}\over A_{u,v}}~, & \mbox{if $\beta>1 $} \\
& \\
(1+o(1)){C_\beta \ln A_{u,v} \over A_{u,v}},  & \mbox{if $\beta = 1 $} \\
& \\
(1+o(1)){C_\beta \over A_{u,v}^{\beta}}, & \mbox{if $\beta < 1 $}
 \end{cases}.$$
In particular, 
$$ C_\beta = \begin{cases} {2\over \beta}~\sin^{-1} \left( {\pi \over \beta}\right), & \mbox{if $\beta >1 $} \\
{2 \over \pi}, & \mbox{if $\beta = 1 $} \\
{1 \over \sqrt{\pi}}~{ \Gamma \left({1-\beta \over 2} \right) \over \Gamma\left(1 -{\beta \over 2} \right)}, & \mbox{if $\beta < 1$}
\end{cases}.$$ 
\end{lemma}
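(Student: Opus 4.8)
The plan is to linearise the edge probability $p_{u,v}$ using Lemma~\ref{lem:Dist}, then to reduce the angular average $\hat{p}_{u,v}=\frac1\pi\int_0^\pi p_{u,v}\,d\theta$ to a single scalar integral in $A_{u,v}$, and finally to read the three regimes off the convergence behaviour of that integral. Throughout I would use that on $\mathcal{D}_{R,\omega}^{(2)}$ we have $A_{u,v}=\exp(\tfrac\zeta2(R-t_u-t_v))\ge \exp(\tfrac\zeta2\omega(N))\to\infty$, so every estimate is taken as $A_{u,v}\to\infty$. For angles with $\hat{\theta}_{u,v}\ll\theta_{u,v}\le\pi$, Lemma~\ref{lem:Dist} gives
\[\frac\zeta2\bigl(d(u,v)-R\bigr)=\log\Bigl(A_{u,v}\sin\tfrac{\theta_{u,v}}2\Bigr)+O\Bigl((\hat{\theta}_{u,v}/\theta_{u,v})^2\Bigr),\]
so that, using $\frac1{y(1+\delta)+1}=\frac1{y+1}(1+O(\delta))$ with $y=(A_{u,v}\sin\tfrac{\theta_{u,v}}2)^\beta$,
\[p_{u,v}=\frac{1}{(A_{u,v}\sin\tfrac{\theta_{u,v}}2)^\beta+1}\Bigl(1+O\bigl((\hat{\theta}_{u,v}/\theta_{u,v})^2\bigr)\Bigr).\]

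Next I would control the angular range with a cutoff $\theta_0=\hat{\theta}_{u,v}\,g(N)$, where $g\to\infty$ grows slowly. A short computation using $t_u,t_v\le x_0$ together with $\zeta/\alpha<2$ (whence $R-x_0=\Omega(\log N)$) shows $A_{u,v}\hat{\theta}_{u,v}\le N^{-c}$ for some $c>0$, uniformly over $\mathcal{D}_{R,\omega}^{(2)}$. Hence the band $\theta\in[0,\theta_0]$ contributes at most $\theta_0/\pi$ (bounding $p_{u,v}\le1$), which is negligible against the claimed leading term in each regime; on $[\theta_0,\pi]$ the relative error above is uniformly $O(g(N)^{-2})=o(1)$ and factors out of the integral. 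After the substitution $\phi=\theta/2$, and absorbing the equally negligible $\phi\in[0,\theta_0/2]$ band of the clean integrand into the error, this reduces the claim to $\hat{p}_{u,v}=(1+o(1))\,I$, where
\[I:=\frac2\pi\int_0^{\pi/2}\frac{d\phi}{(A_{u,v}\sin\phi)^\beta+1}.\]

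It then remains to evaluate $I$ as $A_{u,v}\to\infty$; since $I$ depends on $(u,v)$ only through $A_{u,v}\ge e^{\frac\zeta2\omega(N)}$, uniformity over $\mathcal{D}_{R,\omega}^{(2)}$ is automatic. This is where the value of $\beta$ decides the behaviour. For $\beta<1$ I would factor out $A_{u,v}^{-\beta}$ and pass to the limit by dominated convergence with dominating function $\sin^{-\beta}\phi$ (integrable near $0$ exactly because $\beta<1$), obtaining $\frac2\pi A_{u,v}^{-\beta}\int_0^{\pi/2}\sin^{-\beta}\phi\,d\phi$; the Beta integral $\int_0^{\pi/2}\sin^{-\beta}\phi\,d\phi=\tfrac{\sqrt\pi}2\,\tfrac{\Gamma((1-\beta)/2)}{\Gamma(1-\beta/2)}$ then gives $C_\beta=\tfrac1{\sqrt\pi}\tfrac{\Gamma((1-\beta)/2)}{\Gamma(1-\beta/2)}$. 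For $\beta>1$ the mass instead concentrates near $\phi=0$, so I would rescale $x=A_{u,v}\phi$ and use Jordan's inequality $\sin t\ge\tfrac2\pi t$ on $[0,\pi/2]$ to produce the integrable dominating function $((2x/\pi)^\beta+1)^{-1}$ (integrable near $\infty$ exactly because $\beta>1$), giving $\frac2\pi\cdot\tfrac1{A_{u,v}}\int_0^\infty\frac{dx}{x^\beta+1}=\tfrac1{A_{u,v}}\cdot\tfrac2\pi\cdot\tfrac{\pi/\beta}{\sin(\pi/\beta)}$ and hence $C_\beta=\tfrac2\beta\cdot\tfrac1{\sin(\pi/\beta)}$. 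The borderline $\beta=1$ is precisely the case where both dominating functions fail to be integrable; here an elementary estimate gives $\int_0^{\pi/2}\frac{d\phi}{A_{u,v}\sin\phi+1}=\tfrac{\log A_{u,v}}{A_{u,v}}(1+o(1))$, whence $C_1=\tfrac2\pi$.

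The main obstacle is the uniform passage from $p_{u,v}$ to the clean integrand near $\theta=0$, where Lemma~\ref{lem:Dist} does not apply: I must show both that discarding the band $\theta\in[0,\theta_0]$ costs only $o(\hat{p}_{u,v})$ in every regime and that the multiplicative factor $1+O((\hat{\theta}_{u,v}/\theta_{u,v})^2)$ on $[\theta_0,\pi]$ is uniformly $o(1)$. Both hinge on the estimate $A_{u,v}\hat{\theta}_{u,v}\le N^{-c}$, which in turn uses $\zeta/\alpha<2$ through $R-x_0=\Omega(\log N)$; this is the one place where the hypothesis on the curvature enters. The regime dependence itself is then conceptually transparent: it is exactly the tail integrability of $x^{-\beta}$ at $\infty$ (when $\beta>1$) versus the endpoint integrability of $\phi^{-\beta}$ at $0$ (when $\beta<1$), with neither holding at $\beta=1$, that separates the cold, critical and hot phases and fixes the three constants $C_\beta$.
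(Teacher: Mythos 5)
Your proposal is correct and follows essentially the same route as the paper: the same linearisation via Lemma~\ref{lem:Dist} reducing $\hat{p}_{u,v}$ to the integral of $\bigl((A_{u,v}\sin(\theta/2))^{\beta}+1\bigr)^{-1}$ after discarding a small-angle band whose negligibility rests on the estimate $A_{u,v}\hat{\theta}_{u,v}\to 0$ (the paper's Claim~\ref{clm:IntBndrs}, where $\zeta/\alpha<2$ enters exactly as you say), followed by the same three-way case analysis driven by integrability at $\infty$ versus at $0$. The only difference is in execution rather than approach: you evaluate the limiting integrals by dominated convergence after rescaling, whereas the paper sandwiches them between explicit upper and lower bounds using $\theta-\theta^{3}\leq\sin\theta\leq\theta$ and its Claim~\ref{clm:MainIntegral}; both yield the same constants $C_\beta$.
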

\begin{proof} 
Throughout this proof we write $A_{u,v}$ for $A(t_u,t_v)$. 
Recall that 
$$ p_{u,v} = {1\over \exp\left(\beta ~ {\zeta \over 2}(d(u,v)- R)\right) + 1}. $$
We will estimate the integral of $p_{u,v}$ over $\theta_{u,v}$. When $\theta_{u,v}$ is within the range given 
in Lemma~\ref{lem:Dist} we will use the estimate given there.  
In particular, we shall define $\tilde{\theta}_{u,v}\gg \hat{\theta}_{u,v}$ and split the integral into two 
parts, namely when $0\leq \theta_{u,v} < \tilde{\theta}_{u,v}$ and when $\tilde{\theta}_{u,v} \leq \theta_{u,v} \leq \pi$.
The parameter $\tilde{\theta}_{u,v}$ is close to $A_{u,v}^{-1}$. 
Note that the following holds.
\begin{claim} \label{clm:IntBndrs}
If $R-t_u-t_v \rightarrow \infty$ as $N \rightarrow \infty$, then 
$$ A_{u,v}^{-1} \gg \hat{\theta}_{u,v}.$$
\end{claim}
We postpone the proof of this claim until later. 

Let $\omega (N)$ be slowly enough growing so that in the following definition of $\tilde{\theta}_{u,v}$, we have 
$\tilde{\theta}_{u,v} \gg \hat{\theta}_{u,v}$, when $\beta \geq 1$, and $\tilde{\theta}_{u,v} = o(A_{u,v}^{-\beta})$, when $\beta < 1$.
We set 
$$\tilde{\theta}_{u,v} = \begin{cases}{A_{u,v}^{-1}\over \omega (N)}, & \ \mbox{if $\beta \geq 1$} \\ 
\omega (N) A_{u,v}^{-1}, & \ \mbox{if $\beta < 1$}
\end{cases}.$$ 
Thus when $\tilde{\theta}_{u,v} \leq \theta_{u,v} \leq \pi$, we use Lemma~\ref{lem:Dist} and write 
\begin{equation}  \label{eq:DenomApx} 
\begin{split}
\exp\left(\beta ~ {\zeta \over 2}(d(u,v)- R)\right) = C~e^{\beta {\zeta\over 2} (R - (t_u + t_v)) + \beta~  
\log \sin (\theta_{u,v}/2)}, 
\end{split}
\end{equation}
where $C = 1 + O \left( \left( {\hat{\theta}_{u,v} \over \tilde{\theta}_{u,v}}\right)^2 \right)$. 
By Claim~\ref{clm:IntBndrs} and the choice of the function $\omega(N)$, we have that $C=1+o(1)$.

We decompose the integral that gives $\hat{p}_{u,v}$ into two parts which we bound separately. 
 \begin{equation} \label{eq:InitSplit}
\hat{p}_{u,v} = {1\over \pi}~\int_{0}^{\pi} p_{u,v} d\theta ={1 \over \pi}~\int_{0}^{\tilde{\theta}_{u,v}} p_{u,v} d \theta 
+ {1 \over \pi}~\int_{\tilde{\theta}_{u,v}}^{\pi} p_{u,v} d \theta. 
\end{equation}
The first integral can bounded trivially as follows:
\begin{equation}\label{eq:1stIntegral}
\int_{0}^{\tilde{\theta}_{u,v}} p_{u,v} d \theta  \leq \tilde{\theta}_{u,v} = \begin{cases} 
o \left( A_{u,v}^{-1} \right), & \ \mbox{if $\beta \geq 1$} \\
o \left( A_{u,v}^{-\beta} \right), & \ \mbox{if $\beta < 1$}
\end{cases}.
\end{equation}
We now focus on the second integral in (\ref{eq:InitSplit}). 
We will treat the cases $\beta < 1$ and $\beta \geq 1$ separately, starting with the former one. 

\medskip 

\noindent
$\beta < 1$ 
\smallskip

\noindent
% We derive an upper bound as follows
% \begin{equation} \label{eq:HotUpperBound}
% \int_{\tilde{\theta}_{u,v}}^{\pi} p_{u,v} d \theta \leq 
% \int_{\tilde{\theta}_{u,v}}^{\pi} {1\over C A_{u,v}^\beta \sin^{\beta}\left({\theta \over 2}\right) } d \theta 
% = {1+o(1)\over A_{u,v}^\beta}  \int_0^{\pi} {1\over \sin^{\beta} \left({\theta \over 2}\right)} d \theta.
% \end{equation}
Recall that $\tilde{\theta}_{u,v}$ is such that $\tilde{\theta}_{u,v} \gg
A_{u,v}^{-1}$. 
Thus, we write 
\begin{equation} \label{eq:HotLowerBound}  
\begin{split}
&\int_{\tilde{\theta}_{u,v}}^{\pi} p_{u,v} d \theta   
= \int_{\tilde{\theta}_{u,v}}^{\pi} {1 \over C A_{u,v}^{\beta} \sin^{\beta} \left({\theta \over 2}\right)+1} d\theta 
= {1+o(1) \over A_{u,v}^{\beta}}~\int_{\tilde{\theta}_{u,v}}^{\pi} {1 \over \sin^{\beta} \left({\theta \over 2}\right) + \Theta 
\left( A_{u,v}^{-\beta}\right) } d \theta \\ 
&= 
{1+o(1) \over  A_{u,v}^{\beta}}~\int_{\tilde{\theta}_{u,v}}^{\pi} {1 \over \sin^{\beta} \left({\theta \over 2}\right)} d \theta 
= {1+o(1) \over A_{u,v}^{\beta}}~\int_{0}^{\pi} {1 \over \sin^{\beta} \left({\theta \over 2}\right)} d \theta. 
\end{split}
\end{equation}
Substituting the estimates of (\ref{eq:1stIntegral}) and (\ref{eq:HotLowerBound}) into (\ref{eq:InitSplit}) we obtain
\begin{equation} \label{eq:HotLemma}
\hat{p}_{u,v}  = (1+o(1))~{1 \over \pi}~\left( \int_{0}^{\pi} {1 \over \sin^{\beta} \left({\theta \over 2}\right)} d \theta \right)~ 
{1\over A_{u,v}^{\beta}}, 
\end{equation}
uniformly for all $u,v \in \mathcal{D}_{R,\omega}^{(2)}$.
Finally, note that 
$$ \int_{0}^{\pi} {1 \over \sin^{\beta} \left({\theta \over 2}\right)} d \theta = 2  
\int_{0}^{\pi/2} {1 \over \sin^{\beta} \left( \theta \right)} d \theta = \sqrt{\pi}~{\Gamma \left({1-\beta \over 2} \right) \over \Gamma\left(1 -{\beta \over 2} \right)}. $$
\medskip 

\noindent
$\beta \geq 1$ 
\smallskip

\noindent
We use the inequality $\sin \theta \leq \theta$, which holds for all $\theta \in [0, \pi]$, and obtain an upper bound on the right-hand side of 
(\ref{eq:DenomApx}). 
\begin{equation*} %\label{eq:DenomUpperBound}
\begin{split} 
\exp\left(\beta ~ {\zeta \over 2}(d(u,v)- R)\right)  
\leq C~e^{\beta {\zeta\over 2} (R - (t_u + t_v))}~ \left( {\theta_{u,v} \over 2} \right)^{\beta}  = C~A_{u,v}^{\beta}
~\left( {\theta_{u,v} \over 2} \right)^\beta.
\end{split} 
\end{equation*}
Using this bound we can bound the second integral in (\ref{eq:InitSplit}) from below as follows.  
\begin{equation} \label{eq:LowerBound}
\begin{split}
\int_{\tilde{\theta}_{u,v}}^{\pi} p_{u,v} d \theta \geq \int_{\tilde{\theta}_{u,v}}^{\pi } 
{1\over C~A_{u,v}^{\beta}~\left( {\theta \over 2} \right)^\beta +1} d \theta.
\end{split}
\end{equation}
We perform a change of variable setting $z= C^{1/\beta}~A_{u,v}~{\theta \over 2}$. Thus with $C' = C^{1/\beta}/2$ 
the integral on the right-hand side of (\ref{eq:LowerBound}) becomes
\begin{equation} \label{eq:IntLower}
\begin{split}
&\int_{\tilde{\theta}_{u,v}}^{\pi} 
{1\over C~A_{u,v}^{\beta}~\left( {\theta \over 2} \right)^\beta +1} d \theta = {1 \over C'}~{1\over A_{u,v}}~
\int_{C'A_{u,v} \tilde{\theta}_{u,v}}^{C'\pi A_{u,v}} {1\over z^{\beta} + 1} dz. 
%&={2 \over C^{1/\beta}}~{1\over A_{u,v}}~\left(\int_{0}^{\infty} {1\over z^\beta + 1}dz - o(1)\right).
\end{split}
\end{equation}
We now provide an estimate for the integral on the right-hand side of (\ref{eq:IntLower}) for any $\beta \geq 1$ - its proof is elementary and 
we omit it. 
\begin{claim} \label{clm:MainIntegral} 
Let $g_1(N)$ and $g_2 (N)$ be non-negative real-valued functions on the set of natural numbers, such that 
$g_1 (N) \rightarrow 0$ and $g_2 (N) \rightarrow \infty$ as $N \rightarrow \infty$. We have 
$$ 
\int_{g_1(N)}^{g_2 (N)} {1\over z^\beta + 1}dz =
\begin{cases} 
(1+o(1)) \int_{0}^{\infty} {1\over z^{\beta} + 1}dz, &  \mbox{if $\beta > 1$} \\
& \\
(1+o(1)) \ln g_2 (N), & \mbox{if $\beta = 1$} 
% \\ 
%  & \\
%  {(1+o(1))\over 1-\beta}~g_2(N)^{1-\beta}, & \mbox{if $\beta < 1$}\\
\end{cases}.
$$
\end{claim}  
We take $g_1 (N) = C'A_{u,v} \tilde{\theta}_{u,v} = C'/ \omega (N) \rightarrow 0$ and 
$g_2 (N) = C'\pi A_{u,v} \rightarrow \infty$, since $u, v \in \mathcal{D}_{R,\omega}^{(2)}$, and we obtain through (\ref{eq:LowerBound}): 
\begin{equation} \label{eq:LowerFinal}
\int_{\tilde{\theta}_{u,v}}^{\pi} p_{u,v} d \theta \geq 
\begin{cases} 
 \left( 1+o(1)\right)~{2 \over C^{1/\beta}}~{1\over A_{u,v}}~\int_{0}^{\infty} {1\over z^\beta + 1}dz, & \mbox{if $\beta > 1$} \\
 & \\
 \left( 1+ o(1)\right)~{2 \over C^{1/\beta}}~{\ln A_{u,v}\over A_{u,v}}, & \mbox{if $\beta = 1$} 
% \\
%  & \\
%  \left( 1+ o(1)\right)~{2^{\beta} \pi^{1-\beta} \over (1-\beta) C}~{1 \over A_{u,v}^{\beta}}, & \mbox{if $\beta < 1$}
\end{cases} .
\end{equation}

To deduce the upper bound we will split the integral into two parts. 
% First, let us set $\hat{\theta}_{u,v}' = L (N) A_{u,v}^{-1}$, where 
% $L: \mathbb{N} \rightarrow \mathbb{R}^+$ is some sufficiently slowly growing function, so that $\hat{\theta}_{u,v}' =o(1)$ as $N$ grows. 
% Taking $\omega(N)$ in the definition of $\tilde{\theta}_{u,v}$ such that $\tilde{\theta}_{u,v} = o (A_{u,v}^{-1})$ 
% and, moreover, $\tilde{\theta}_{u,v} \gg \hat{\theta}_{u,v}$ we have 
% $$\hat{\theta}_{u,v}' \gg \tilde{\theta}_{u,v} \gg \hat{\theta}_{u,v} $$
For an $\eps \in (0,\pi)$, we write
\begin{equation} \label{eq:IntSplit} 
\int_{\tilde{\theta}_{u,v}}^{\pi} p_{u,v} d \theta = 
 \int_{\tilde{\theta}_{u,v}}^{\eps} p_{u,v} d \theta + \int_{\eps}^{\pi} p_{u,v} d \theta.
\end{equation}
%The first integral is $o(A_{u,v}^{-1})$ by (\ref{eq:1stIntegral}). 
We will bound each one of the integrals on the right-hand side separately. 

For the first integral we will use (\ref{eq:DenomApx}) together with the bound $\sin \theta \geq \theta - \theta^3$, which holds 
for any $\theta \leq \eps$, provided that the latter is sufficiently small. 
More specifically, we let $\eps = \eps (N) \in (0, \pi)$ be a slowly decaying function so that $A_{u,v} \eps (N) \rightarrow \infty$ as 
$N \rightarrow \infty$.
We shall also use $(1-\theta^2)^{\beta} \geq 1-\beta \theta^2$.
Thus (after a change of variable where we replace $\theta/2$ by $\theta$) for sufficiently large $N$, we have 
\begin{equation*}
\begin{split}
 \int_{\tilde{\theta}_{u,v}}^{\eps} p_{u,v} d \theta & \leq 2\int_{\tilde{\theta}_{u,v}/2}^{\eps/2} 
  {1 \over C A_{u,v}^{\beta} (\theta - \theta^3)^{\beta} +1} d \theta \leq  
  2\int_{\tilde{\theta}_{u,v}/2}^{\eps/2} 
  {1 \over C A_{u,v}^{\beta} \theta^\beta (1 - \beta \theta^2 ) +1} d \theta \\
  & \leq 2\int_{\tilde{\theta}_{u,v}/2}^{\eps/2} 
  {1 \over C A_{u,v}^{\beta} \theta^\beta (1 - \beta \eps^2 /4 ) +1} d \theta.
\end{split}
\end{equation*}
We change the variable in the last integral setting $z= \left[C (1 - \beta \eps^2 /4 ) \right]^{1/\beta} A_{u,v} \theta$. 
Thus, we obtain: 
\begin{equation*}
\begin{split} 
\int_{\tilde{\theta}_{u,v}}^{\hat{\theta}_{u,v}'} p_{u,v} d \theta  \leq 
{2\over  \left[C (1 - \beta \eps^2 /4 ) \right]^{1/\beta} A_{u,v}} 
\int_{B_1}^{B_2} {1\over z^{\beta} + 1} dz,
\end{split}
\end{equation*} 
where $B_1 = \left[C (1 - \beta \eps^2 /4 ) \right]^{1/\beta} A_{u,v}\tilde{\theta}_{u,v}/2$ and 
$B_2 = \left[C (1 - \beta \eps^2 /4 ) \right]^{1/\beta} A_{u,v} \eps/2 $.
We have $B_1=o(1)$ whereas, % by the definition of $\hat{\theta}_{u,v}'$, we have  
$B_2 =\left[C (1 - \beta \eps^2 /4 ) \right]^{1/\beta} A_{u,v} \eps/2 % L(N)/2 
\rightarrow \infty$. So, Claim~\ref{clm:MainIntegral} yields:
\begin{equation}\label{eq:2ndIntegral}
 \int_{\tilde{\theta}_{u,v}}^{\eps} p_{u,v} d \theta \leq 
\begin{cases}
(1+o(1)) ~{2 \over A_{u,v}}~ \int_{0}^{\infty} {1\over z^{\beta} + 1}dz, & \mbox{if $\beta > 1$} \\
 & \\
(1+o(1))~{2\ln A_{u,v} \over A_{u,v}}, & \mbox{if $\beta = 1$} \\ 
%& \\ 
%(1+o(1)) {2^{\beta} \eps^{1-\beta} \over C}~{1 \over A_{u,v}^{\beta}}.  
\end{cases}.  
\end{equation}
% Regarding now the last integral in (\ref{eq:IntSplit}) we split it further into two parts: 
% \begin{equation} \label{eq:3rdIntSplit}
% \int_{\hat{\theta}_{u,v}'}^{\pi} p_{u,v} d \theta =\int_{\hat{\theta}_{u,v}'}^{\eps} p_{u,v} d \theta + \int_{\eps}^{\pi} p_{u,v} d \theta,
% \end{equation}
% where $\eps >0$ is a constant such that for all $\theta \leq \eps$, we have $\sin \theta \geq \theta/2$. 
The second integral in (\ref{eq:IntSplit}) can be bounded easily. 
% \footnote{$\int_{\eps}^{\pi}  {1 \over C A_{u,v}^{\beta}~\sin^{\beta}(\theta /2) +1} d \theta \leq 
% \int_{\eps}^{\pi}  {1 \over C A_{u,v}^{\beta}~\sin^{\beta}(\theta /2)} d \theta \leq  
% \int_{\eps}^{\pi}  {1 \over C A_{u,v}^{\beta}~(\theta /\pi)^{\beta} d \theta $} 
$$  \int_{\eps}^{\pi} p_{u,v} d \theta = 
\int_{\eps}^{\pi} 
 {1 \over C A_{u,v}^{\beta}~\sin^{\beta}(\theta /2) +1} d \theta = O \left( {1\over \left( A_{u,v} \eps \right)^{\beta}}\right) = 
o(A_{u,v}^{-1}), $$
if $\eps$ is decaying slowly enough. 
Hence, uniformly for all $u, v \in  \mathcal{D}_{R,\omega}^{(2)}$ we have
\begin{equation}\label{eq:ColdUpperBound}
 \int_{\tilde{\theta}_{u,v}}^{\pi} p_{u,v} d \theta \leq 
\begin{cases}
(1+o(1)) ~{2 \over A_{u,v}}~ \int_{0}^{\infty} {1\over z^{\beta} + 1}dz, & \mbox{if $\beta > 1$} \\
 & \\
(1+o(1))~{2\ln A_{u,v} \over A_{u,v}}, & \mbox{if $\beta = 1$} \\ 
%& \\ 
%(1+o(1)) {2^{\beta} \eps^{1-\beta} \over C}~{1 \over A_{u,v}^{\beta}}.  
\end{cases}.  
\end{equation}
Thereby, (\ref{eq:InitSplit}) together with (\ref{eq:1stIntegral}) and (\ref{eq:LowerFinal}),(\ref{eq:ColdUpperBound}) yield the lemma for 
$\beta \geq 1$. Finally, note that when $\beta > 1$ we have $\int_{0}^{\infty} {1\over z^{\beta} + 1}dz= {\pi \over \beta} 
\sin^{-1} \left( {\pi \over \beta}\right)$.
 
%  The first one is slightly more involved. We have
% \begin{equation*} 
% \begin{split}
% \int_{\hat{\theta}_{u,v}'}^{\eps} & p_{u,v} d \theta  = \int_{\hat{\theta}_{u,v}'}^{\eps} 
%  {1 \over C A_{u,v}^{\beta}~\sin^{\beta}(\theta /2) +1} d \theta \stackrel{\sin \theta \geq \theta /2}{\leq}
%  \int_{\hat{\theta}_{u,v}'}^{\eps} 
%  {1 \over C A_{u,v}^{\beta}~ ( \theta /4)^{\beta} +1} d \theta \\
% & \leq  \int_{\hat{\theta}_{u,v}'}^{\eps} {1 \over C A_{u,v}^{\beta}~ ( \theta /4)^{\beta}} d \theta  
% = {1\over C}~\left( {4 \over A_{u,v}} \right)^{\beta}~\int_{\hat{\theta}_{u,v}'}^{\eps} {1\over \theta^{\beta}} d \theta
% \leq {1\over C(\beta -1)}~\left( {4 \over A_{u,v}} \right)^{\beta} \hat{\theta}_{u,v}'^{-\beta + 1}.
% \end{split}
% \end{equation*}
% But using the definition of $\hat{\theta}_{u,v}'$, we can write
% $$ {\hat{\theta}_{u,v}' \over \left( A_{u,v} \hat{\theta}_{u,v}'\right)^{\beta}} = {A_{u,v}^{-1} L(N) \over L^{\beta}(N)} = 
% {A_{u,v}^{-1} \over L^{\beta -1}(N)} \stackrel{\beta > 1}{=} o(A_{u,v}^{-1}).$$
% Thus 
% \begin{equation} \label{eq:3rdIntegral} 
% \int_{\hat{\theta}_{u,v}'}^{\pi} p_{u,v} d \theta  = o \left( A_{u,v}^{-1} \right). 
% \end{equation}
% Putting together the bounds of (\ref{eq:1stIntegral}), (\ref{eq:2ndIntegral}) and (\ref{eq:3rdIntegral}) into (\ref{eq:IntSplit}) concludes the
% proof of the upper bound. We take $h = 2/C^{1/\beta}$. 

We now conclude the proof of the lemma with the proof of Claim~\ref{clm:IntBndrs}. 
\begin{proof}[Proof of Claim~\ref{clm:IntBndrs}] 
We will show that $A_{u,v}^{-1} \gg \hat{\theta}_{u,v}$. Equivalently, it is sufficient to show that 
\begin{equation} \label{eq:ToProve}
{\zeta \over 2} \left( R-(t_u+t_v) \right) + {1\over 2} \log \left( e^{-2\zeta(R-t_u)} + e^{-2\zeta(R-t_v)}\right) \rightarrow -\infty, 
\end{equation}
as $N \rightarrow \infty$. Adding and subtracting $R$ inside the brackets in the first summand, we obtain: 
\begin{equation*} 
\begin{split}
& {\zeta \over 2} \left( R-(t_u+t_v) \right) + {1\over 2} \log \left( e^{-2\zeta(R-t_u)} + e^{-2\zeta(R-t_v)}\right) \\
&= -{\zeta R \over 2} + {1\over 2} \left(\zeta (R-t_u) +\zeta (R-t_v)  \right) + 
{1\over 2} \log \left( e^{-2\zeta(R-t_u)} + e^{-2\zeta(R-t_v)}\right).
\end{split}
\end{equation*}
For notational convenience, we write $a = \zeta (R-t_u)$ and $b = \zeta (R-t_v)$. 
Without loss of generality, assume that $a \leq b$.
Thus, the above expression is now written as 
\begin{equation*}
\begin{split} 
&-{\zeta R \over 2} + {1\over 2} \left( a +b  \right) +  {1\over 2} \log \left( e^{-2a } + e^{-2b}\right) = 
 -{\zeta R \over 2} + {1\over 2} \left[ a +b   +   \log e^{-2a} + \log(1+e^{-2(b - a)}) \right] \\
& = -{\zeta R \over 2} + {1\over 2} \left[ b  - a + \log(1+e^{-2(b - a)}) \right] \leq
-{\zeta R \over 2} + {1\over 2} \left[ b  - a + e^{-2(b - a)} \right].  
\end{split}
\end{equation*}
Note that $b - a \leq R \left(\zeta -1 + \zeta /(2\alpha) \right) + \omega (N)$. Therefore, 
\begin{equation*}
\begin{split} 
-{\zeta R \over 2} + {1\over 2} \left( a  + b \right) +  {1\over 2} \log \left( e^{-2a} + e^{-2b}\right) 
\leq -R \left(1 - {\zeta \over 2 \alpha} \right) +{ \omega (N) + 1 \over 2}\rightarrow -\infty,
\end{split}
\end{equation*}
if $\omega (N)$ grows slowly enough. 
\end{proof}
\end{proof}
\subsection{Interlude: hyperbolic random graphs as inhomogeneous random graphs}
The notion of \emph{inhomogeneous random graphs}, was introduced S\"oderberg~\cite{ar:s02} and
was studied in great detail by Bollob\'as, Janson and Riordan in~\cite{BJR}.
In its most general setting, there is an underlying compact metric space $\mathcal{S}$ equipped with a measure $\mu$ on its 
Borel $\sigma$-algebra. This is the space of \emph{types} of the vertices. A \emph{kernel} $\kappa$ is a bounded real-valued,
non-negative function on $\mathcal{S} \times \mathcal{S}$, which is symmetric. It is assumed that the vertices of the random 
graph are points in $\mathcal{S}$. If $x, y \in \mathcal{S}$, then the corresponding vertices are joined with probability 
that is equal to ${\kappa (x,y) \over N} \wedge 1$, where $N$ is the total number of vertices, independently of every other pair. 
The points that are the vertices of the graph are approximately distributed according to $\mu$. 
More specifically, the empirical distribution function on the $N$ points converges weakly to $\mu$ as $N \rightarrow \infty$. 

When $\beta > 1$, the above lemma gives an expression for the probability that two vertices $u$ and $v$ having 
types $t_u$ and $t_v$, respectively, are adjacent. This expression is proportional to $A_{u,v}^{-1}= {e^{\zeta t_u/2}e^{\zeta t_v/2} 
\over N}$. Thus a hyperbolic random graph at the cold regime may be viewed as an inhomogeneous random graph 
on $N$ vertices with kernel function which is equal (up to a multiplicative constant) to $(1/x)^{\zeta/2}(1/y)^{\zeta /2}$, where 
$x,y \in (0,1]$. (Here, we have applied the transformation $e^{t_u}=1/T_u$, where $T_u \in (0,1]$.) 
In fact, this kernel corresponds to the \emph{Chung-Lu} model of random graphs with given expected degrees - see 
\cite{ChungLu1+}, \cite{ChungLuComp+} as well as (6.1.20) on page 124 in~\cite{bk:vdH}. 
When $\beta < 1$, the corresponding kernel is that of a not-too-sparse inhomogeneous random graph. 

However, this analogy is not precise, as if we condition on the types of the vertices the edges do not appear independently.
As we shall see later in our analysis (cf. Section~\ref{sec:Corr}), if we condition on the event that $v_1$ and $v_2$ are both adjacent 
to a vertex $u$, then this increases the probability that $v_1$ is adjacent to $v_2$.

\section{The distribution of the degree of a vertex} \label{sec:distribution}
In this section, we prove Theorem~\ref{thm:DegreeDistribution}. 
Let us fix some $u \in V_N$. 
% We will show that for any integer $k\geq 0$, we have 
% \begin{equation} \label{eq:ToShow}
% \Prb {D_u = k} \rightarrow \Prb {\MP (F) = k}, \ \mbox{as} \ N \rightarrow \infty.
% \end{equation}
We will condition on the position of $u$ in $\D$; in particular, we will condition on $t_u$ and the angle $\theta_u$. 
In this section as well as later, we will be writing $\rho(t)$ for $\alpha \sinh (\alpha(R-t)) /(\cosh (\alpha R) -1)$. 

Let us fix first $t_u$ and $\theta_u$ such that $t_{u} \leq x_0$ and $\theta_u \in (0,2\pi]$. 
(Recall that by Corollary~\ref{cor:EffectiveArea} we have that a.a.s. $t_u \leq x_0 = \zeta R/(2\alpha) + \omega (N)$ for all $u \in V_N$.) 
Denoting by $V_N^u$ the set $V_N \setminus \{  u\}$, we write $D_u = \sum_{v \in V_N^u} I_{uv}$, where $I_{uv}$ is the indicator random 
variable that is equal to 1 if and only if the edge $\{ u,v \}$ is present in $\G (N;\zeta ,\beta)$. 
Note that conditional on $t_u$ and $\theta_u$ the family $\{ I_{uv}\}_{v \in V_N^u}$ is a family of independent and 
identically distributed random variables. 

We begin with the estimation of the expectation of $I_{uv}$ for an arbitrary 
$v \in V_{N}^u$  conditional on $t_u$ and $\theta_u$. 
\begin{lemma} \label{lem:IndExp} 
Let $\beta > 0$ and $0< \zeta/ \alpha < 2$. There exists a constant $K=K(\zeta, \beta, \alpha )>0$ such that
uniformly for all $t_u \leq x_0$ and $\theta_u \in (0,2 \pi ]$, we have 
$$\Prb {I_{uv} = 1 \ | \ t_u, \theta_u} = 
\begin{cases}
\left(1+ o(1) \right)~K~{e^{\zeta t_u /2} \over N}, & \mbox{if $\beta > 1$} \\
& \\
(1+o(1))~K~(R-t_u)~ {e^{\zeta t_u /2}\over N}, & \mbox{if $\beta = 1$} \\
 & \\
(1+o(1))~K~\left({e^{\zeta t_u /2} \over N} \right)^{\beta}, & \mbox{if $\beta < 1$}
\end{cases}.
$$
In particular, we have 
$$
K= \begin{cases} 
 {4 \alpha \over 2\alpha - \zeta}~{1 \over \beta}~\sin^{-1} \left({\pi \over \beta} \right), & \mbox{if $\beta > 1$} \\
  \\
  {1\over \pi}~{2\alpha \zeta \over 2\alpha - \zeta}, & \mbox{if $\beta = 1$} \\
  \\
  {1\over \sqrt{\pi}}~{2\alpha  \over 2\alpha -\beta \zeta}~{\Gamma \left({1-\beta \over 2} \right) \over 
\Gamma\left(1 -{\beta \over 2} \right)}, & 
\mbox{if $\beta < 1$}\\
\end{cases}.
 $$
\end{lemma}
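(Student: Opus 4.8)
The plan is to compute the conditional probability by integrating over the random position of $v$. Since $p_{u,v}$ depends on $u$ and $v$ only through their types $t_u,t_v$ and their relative angle $\theta_{u,v}$, and since, given $\theta_u$, the relative angle $\theta_{u,v}$ is uniformly distributed on $[0,\pi]$ regardless of the value of $\theta_u$, averaging $p_{u,v}$ over the uniform $\theta_v$ yields exactly $\hat p_{u,v}$. Integrating further over $t_v$, whose density is $\rho$, gives
\begin{equation*}
\Prb{I_{uv}=1 \mid t_u,\theta_u} = \int_0^R \rho(t_v)\,\hat p_{u,v}\,dt_v,
\end{equation*}
an expression that does not depend on $\theta_u$ at all; this is what accounts for the uniformity in $\theta_u$ asserted in the statement.

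First I would split the range of integration along the boundary of $\mathcal{D}_{R,\omega}^{(2)}$: the \emph{good} range, where $t_v\le x_0$ and $R-t_u-t_v\ge\omega(N)$, on which Lemma~\ref{lem:AngleAv} applies, and its complement. On the good range I would substitute the asymptotics of $\hat p_{u,v}$ from Lemma~\ref{lem:AngleAv} together with the estimate $\rho(t_v)=(1+o(1))\,\alpha e^{-\alpha t_v}$, which follows from the computation in the proof of Lemma~\ref{lem:TypeDistr} uniformly for $t_v\le x_0$. Writing $A_{u,v}^{-1}=e^{\zeta t_u/2}e^{\zeta t_v/2}/N$ (using $N=e^{\zeta R/2}$), the integrand reduces to an explicit exponential. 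For $\beta>1$ it is proportional to $e^{-(\alpha-\zeta/2)t_v}$, whose integral over $[0,\infty)$ equals $2/(2\alpha-\zeta)$; multiplying by $C_\beta$ gives $K=\frac{2\alpha}{2\alpha-\zeta}\,C_\beta=\frac{4\alpha}{2\alpha-\zeta}\,\frac1\beta\sin^{-1}(\pi/\beta)$. For $\beta=1$ the extra factor $\ln A_{u,v}=\frac{\zeta}{2}(R-t_u-t_v)$ makes the dominant contribution $(R-t_u)$ times the same exponential integral (the lower-order $-t_v$ part being $O(1)$ against $R-t_u=\Theta(R)$), which produces the linear-in-$(R-t_u)$ behaviour and $K=\frac1\pi\,\frac{2\alpha\zeta}{2\alpha-\zeta}$. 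For $\beta<1$ the integrand is proportional to $e^{-(\alpha-\beta\zeta/2)t_v}$ with integral $2/(2\alpha-\beta\zeta)$, giving $K=\frac{2\alpha}{2\alpha-\beta\zeta}\,C_\beta$. In each case I would extend the upper limit from $x_0$ to $\infty$, the incurred error being exponentially small because $\zeta/\alpha<2$ (for $\beta\ge1$) and $\beta<1<2\alpha/\zeta$ (for $\beta<1$) make the relevant exponents strictly negative.

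The contribution of the bad range I would bound crudely using $\hat p_{u,v}\le1$, so that it is at most the $\rho$-mass of that range. For $t_v>x_0$ this mass is $\le e^{-\alpha x_0}=e^{-\alpha\omega(N)}/N=o(1/N)$ by Corollary~\ref{cor:EffectiveArea}, while for the near-boundary part $R-t_u-t_v<\omega(N)$ it is $O\!\left(e^{-\alpha(R-t_u-\omega(N))}\right)$ by Lemma~\ref{lem:TypeDistr}. I expect the main obstacle to be showing that this last error is genuinely negligible relative to the main term \emph{uniformly} over $t_u\le x_0$: when $t_u$ is near $x_0$ and $\zeta/\alpha$ is near $2$ the two are of comparable polynomial order in $N$, and one must verify that the surviving power of $N$ is strictly negative — which it is precisely because $\zeta/\alpha<2$ — and that the accompanying factors $e^{c\,\omega(N)}$ do not overturn this, forcing $\omega(N)$ to grow slowly enough. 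The remaining case-by-case evaluations are routine exponential and Gamma-function integrals, with the $\beta<1$ case following the same pattern as $\beta\ge1$.
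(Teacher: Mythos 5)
Your proposal is correct and follows essentially the same route as the paper: condition on $t_u$, integrate $\hat p_{u,v}$ from Lemma~\ref{lem:AngleAv} against $\rho(t_v)$ over the range where that lemma applies, reduce to the elementary exponential integrals that produce the stated constants $K$, and bound the complementary range by its $\rho$-mass $e^{\alpha\omega(N)}\left(e^{\zeta t_u/2}/N\right)^{2\alpha/\zeta}$, which is negligible uniformly in $t_u\leq x_0$ precisely because $\zeta/\alpha<2$. Your explicit separation of the sub-case $t_v>x_0$ and your remark on uniformity near $t_u\approx x_0$ are, if anything, slightly more careful than the paper's treatment, but they do not constitute a different argument.
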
 
\begin{proof}
We write $\hat{p} (t_u , t_v)$ for $\hat{p}_{u,v}$ as the latter depends only on $t_u$ and $t_v$. 
Hence, we have 
\begin{equation} \label{eq:IndExp} 
\begin{split}
\Prb {I_{uv} = 1 \ | \ t_u, \theta_u} &= \int_0^R \hat{p}(t_u,t_v)\rho (t_v) d t_v \\
& = \int_0^{R-t_u - \omega (N)} \hat{p}(t_u,t_v)\rho (t_v) d t_v 
  + \int_{R-t_u - \omega (N)}^R \hat{p}(t_u,t_v)\rho (t_v) d t_v.
\end{split}
\end{equation}
The second integral can be bounded as follows. 
\begin{equation} \label{eq:IndExp2nd}
\begin{split}
&\int_{R-t_u - \omega (N)}^R \hat{p}(t_u,t_v)\rho (t_v) d t_v \leq 
\int_{R-t_u - \omega (N)}^R \rho (t_v) d t_v = {1\over \cosh (\alpha R) -1} \left[ \cosh (\alpha (t_u + \omega (N))) - \cosh (0) \right] \\
& = {e^{\alpha (t_u + \omega (N))} \over \cosh (\alpha R) - 1} (1-o(1)) =  {e^{\alpha (t_u + \omega (N))} \over N^{2\alpha /\zeta}} (1-o(1)) = 
e^{\alpha \omega (N)}\left( {e^{\zeta t_u /2}\over N} \right)^{2\alpha /\zeta} (1-o(1)). 
\end{split}
\end{equation}
For the first integral we use the estimates obtained in Lemma~\ref{lem:AngleAv}.  
We set $K_1:=(1+o(1)) C_\beta$. We treat each one of the three cases separately. 

\medskip
\noindent
$\beta > 1$
\smallskip

\noindent
In this case,  we have: 
\begin{equation} \label{eq:ColdIndExp1st} 
\begin{split}
 &\int_0^{R-t_u - \omega (N)} \hat{p}(t_u,t_v)\rho (t_v) d t_v = K_1  \int_0^{R-t_u - \omega (N)} {1\over A (t_u,t_v)}~\rho (t_v) d t_v \\
&= K_1 e^{\zeta t_u /2}  \int_0^{R-t_u - \omega (N)} e^{-{\zeta \over 2}(R-t_v)}~\rho (t_v) d t_v  \\
&= \alpha K_1 e^{\zeta t_u /2}  \int_0^{R-t_u - \omega (N)} e^{-{\zeta \over 2}(R-t_v)}
~{\sinh (\alpha (R-t_v))\over \cosh (\alpha R) -1}d t_v  \\
&= \alpha K_1 {e^{\zeta t_u /2} \over \cosh (\alpha R) -1} \int_{t_u + \omega (N)}^{R}  e^{-{\zeta t_v\over 2}}~\sinh (\alpha t_v) d t_v \\
&=(1+o(1))~{\alpha K_1\over 2}~{e^{\zeta t_u /2} \over \cosh (\alpha R) -1}~\int_{t_u + \omega (N)}^{R} e^{t_v 
\left( \alpha- {\zeta \over 2} \right)} dt_v \\ 
&= (1+o(1))~{\alpha K_1 \over 2 \alpha - \zeta}~{e^{\zeta t_u /2} \over \cosh (\alpha R) -1}
~\left[ e^{R \left(\alpha - {\zeta \over 2} \right)} - e^{\left(\alpha - {\zeta \over 2}\right)(t_u + \omega(N))} \right] \\
& = (1+o(1))~{2\alpha K_1 \over 2\alpha  -\zeta}~{e^{\zeta t_u /2} \over e^{\zeta R/2}}~{e^{\alpha R} \over 2( \cosh (\alpha R) -1)} 
=(1+o(1))~K~{e^{\zeta t_u /2} \over N}.
\end{split}
\end{equation}

\medskip 
\noindent
$\beta = 1$ 
\smallskip

\noindent 
Here we perform a similar but somewhat more involved calculation.  
\begin{equation} \label{eq:CritIndExp1st1} 
\begin{split}
 &\int_0^{R-t_u - \omega (N)} \hat{p}(t_u,t_v)\rho (t_v) d t_v = K_1  \int_0^{R-t_u - \omega (N)} 
{\ln A(t_u,t_v) \over A (t_u,t_v)}~\rho (t_v) d t_v \\
&= K_1 e^{\zeta t_u /2}  \int_0^{R-t_u - \omega (N)} \left({\zeta \over 2}(R-t_u-t_v) \right) 
e^{-{\zeta \over 2}(R-t_v)}~\rho (t_v) d t_v  = \\
&=K_1 {\zeta e^{\zeta t_u /2} \over 2} \times \\
&\left[ \int_0^{R-t_u - \omega (N)} \left( R-t_v \right) 
e^{-{\zeta \over 2}(R-t_v)}~\rho (t_v) d t_v  
-   t_u\int_0^{R-t_u - \omega (N)}  e^{-{\zeta \over 2}(R-t_v)}~\rho (t_v) d t_v   \right]. 
% & K_1 e^{\zeta t_u /2}  \int_0^{R-t_u - \omega (N)} e^{-{\zeta \over 2}(R-t_v)}~{\sinh (R-t_v)\over \cosh R -1}d t_v  \\
% &= K_1 {e^{\zeta t_u /2} \over \cosh R -1} \int_{t_u + \omega (N)}^{R}  e^{-{\zeta t_v\over 2}}~\sinh t_v d t_v =
% (1+o(1))~{K_1\over 2}~{e^{\zeta t_u /2} \over \cosh R -1}~\int_{t_u + \omega (N)}^{R} e^{t_v \left(1- {\zeta \over 2} \right)} dt_v \\ 
% &= (1+o(1))~{K_1 \over 2 -\zeta}~{e^{\zeta t_u /2} \over \cosh R -1}~\left[ e^{R \left(1- {\zeta \over 2} \right)} - e^{\left(1-{\zeta \over
% 2}\right)(t_u + \omega(N))} \right] \\
% & = (1+o(1))~{2K_1 \over 2 -\zeta}~{e^{\zeta t_u /2} \over e^{\zeta R/2}}~{e^R \over 2( \cosh R -1)} 
% =(1+o(1))~K~{e^{\zeta t_u /2} \over N}.
\end{split}
\end{equation}
The second integral is as in (\ref{eq:ColdIndExp1st}) 
$$ \int_0^{R-t_u - \omega (N)}  e^{-{\zeta \over 2}(R-t_v)}~\rho (t_v) d t_v  = (1+o(1)) {2\alpha  \over 2\alpha -\zeta}~{1 \over N}.
$$
For the first one, we use the identity $\int_a^b x e^{tx} = {1 \over t} \left( (b-1/t)e^{tb} - (a-1/t)e^{ta} \right)$. We have 
\begin{equation*}
\begin{split}
&\int_0^{R-t_u - \omega (N)} \left( R-t_v \right) e^{-{\zeta \over 2}(R-t_v)}~\rho (t_v) d t_v  = \\
&{\alpha \over \cosh (\alpha R) -1}\int_0^{R-t_u - \omega (N)} \left( R-t_v \right) e^{-{\zeta \over 2}(R-t_v)}~\sinh (\alpha (R-t_v)) d t_v   \\
&=(1-o(1)) {\alpha \over 2(\cosh (\alpha R) -1)} 
\int_0^{R-t_u - \omega (N)} \left( R-t_v \right) e^{\left(\alpha -{\zeta \over 2}\right) (R-t_v)} d t_v  \\
&= (1-o(1)) {\alpha \over 2(\cosh (\alpha R) -1)} \int_{t_u + \omega (N)}^{R} t_v e^{\left(\alpha - {\zeta \over 2} \right) t_v} dt_v \\
&={1- o(1) \over 2\alpha - \zeta}~ {\alpha \over \cosh (\alpha R) -1}\times \\
& \left( \left( R-{1\over \alpha - \zeta /2} \right)
e^{\left(\alpha - {\zeta \over 2} \right) R} 
- \left( t_u + \omega (N) - {1 \over \alpha - \zeta /2} \right) e^{\left(\alpha - {\zeta \over 2} \right) (t_u + \omega(N))}\right).
\end{split}
\end{equation*}
Since $t_u \leq x_0$, we have 
$$ {(t_u + \omega (N) - 1)  e^{\left(\alpha - {\zeta \over 2} \right) (t_u + \omega(N))}\over \cosh (\alpha R) -1} = 
O \left( { N^{1- \zeta /(2\alpha )} \log^2 N\over N^{2\alpha /\zeta}} \right). $$ 
As ${\zeta \over 2\alpha } + {2\alpha \over \zeta} > 2$, the latter is $o\left(R/N \right)$.
Also, 
$$ {Re^{\left(\alpha - {\zeta \over 2} \right) R}  \over \cosh (\alpha R) -1} = (1+o(1))~{2R \over N}.$$
Hence, 
\begin{equation*}
\begin{split} 
\int_0^{R-t_u - \omega (N)} \left( R-t_v \right) e^{-{\zeta \over 2}(R-t_v)}~\rho (t_v) d t_v  = 
(1+o(1)) {2\alpha  \over 2\alpha  - \zeta}~ {R \over N}.
\end{split}
\end{equation*} 
Substituting this bound into (\ref{eq:CritIndExp1st1}), we finally obtain
\begin{equation}\label{eq:CritIndExp1st}
\begin{split} 
\int_0^{R-t_u - \omega (N)} \hat{p}(t_u,t_v)\rho (t_v) d t_v = 
(1+o(1)) K_1 {\alpha \zeta \over 2\alpha -\zeta}~(R-t_u)~{e^{\zeta t_u /2} \over N}.
\end{split}
\end{equation}

\medskip 
\noindent
$\beta < 1$
\smallskip 

\noindent
This case is similar to (\ref{eq:ColdIndExp1st})
\begin{equation} \label{eq:HotIndExp1st} 
\begin{split}
 &\int_0^{R-t_u - \omega (N)} \hat{p}(t_u,t_v)\rho (t_v) d t_v = K_1  \int_0^{R-t_u - \omega (N)} {1\over A (t_u,t_v)^{\beta}}~\rho (t_v) d t_v \\
&= K_1 e^{\beta \zeta t_u /2}  \int_0^{R-t_u - \omega (N)} e^{-\beta{\zeta \over 2}(R-t_v)}~\rho (t_v) d t_v \\
& = \alpha K_1 e^{\beta\zeta t_u /2}  \int_0^{R-t_u - \omega (N)} 
e^{-{\beta\zeta \over 2}(R-t_v)}~{\sinh (\alpha (R-t_v))\over \cosh (\alpha R) -1}d t_v  \\
&=\alpha K_1 {e^{\beta\zeta t_u /2} \over \cosh (\alpha R) -1} \int_{t_u + \omega (N)}^{R}  e^{-{\beta\zeta t_v\over 2}}~\sinh (\alpha
 t_v) d t_v \\ 
&= (1+o(1))~{\alpha K_1\over 2}~{e^{\beta\zeta t_u /2} \over \cosh (\alpha R) -1}~\int_{t_u + \omega (N)}^{R} 
e^{t_v \left(\alpha - \beta{\zeta \over 2} \right)} dt_v \\ 
&= (1+o(1))~{\alpha K_1 \over 2\alpha  -\beta\zeta}~{e^{\beta \zeta t_u /2} 
\over \cosh (\alpha R) -1}~\left[ e^{R \left(\alpha - \beta{\zeta \over 2} \right)} - 
e^{\left(\alpha -\beta{\zeta \over
2}\right)(t_u + \omega(N))} \right] \\
& = (1+o(1))~{2\alpha K_1 \over 2\alpha -\beta\zeta}~{e^{\beta\zeta t_u /2} \over e^{\beta\zeta R/2}}~
{e^{\alpha R} \over 2( \cosh (\alpha R) -1)} 
=(1+o(1))~K~\left({e^{\zeta t_u /2} \over N} \right)^{\beta}.
\end{split}
\end{equation}

\noindent
Thus combining (\ref{eq:IndExp2nd}) together (\ref{eq:ColdIndExp1st}), (\ref{eq:CritIndExp1st}) and (\ref{eq:HotIndExp1st}) 
the lemma follows. 
\end{proof}
It is clear that the proof of the above lemma yields also the following corollary. 
\begin{corollary} \label{cor:IndExpAng}
Let $0 < \zeta / \alpha < 2$.
There exists a constant $K=K(\zeta, \alpha, \beta)>0$ such that uniformly for all $t_u \leq x_0$, we have 
$$\Prb {I_{uv} = 1 \ | \ t_u } = \begin{cases}
\left(1+ o(1) \right)~K~{e^{\zeta t_u /2} \over N}, & \mbox{if $\beta > 1$} \\
& \\
(1+o(1))~K~(R-t_u)~{e^{\zeta t_u /2}\over N}, & \mbox{if $\beta = 1$} \\
 & \\
(1+o(1))~K~\left({e^{\zeta t_u /2} \over N} \right)^{\beta}, & \mbox{if $\beta < 1$}
\end{cases}.
$$
\end{corollary}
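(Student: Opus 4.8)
The plan is to obtain the corollary directly from Lemma~\ref{lem:IndExp} by averaging out the angular coordinate $\theta_u$. The key point I would stress is that the three-case estimate for $\Prb{I_{uv} = 1 \mid t_u, \theta_u}$ supplied by that lemma is a function of $t_u$ alone and holds uniformly over all $\theta_u \in (0, 2\pi]$; the value of $\theta_u$ never enters the right-hand side. Thus the corollary is not really a new computation but a matter of noting this $\theta_u$-independence and integrating the trivial constant.

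First I would apply the tower property of conditional expectation. Since $\theta_u$ is uniformly distributed on $(0, 2\pi]$ and independent of $t_u$, we have
$$\Prb{I_{uv} = 1 \mid t_u} = \frac{1}{2\pi}\int_0^{2\pi} \Prb{I_{uv} = 1 \mid t_u, \theta_u}\, d\theta_u.$$
Substituting the estimate of Lemma~\ref{lem:IndExp} into the integrand and using that this estimate does not depend on the integration variable $\theta_u$, the integral collapses to the same expression, with the identical constant $K = K(\zeta, \alpha, \beta)$ in each of the three regimes. This establishes the claimed formula.

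There is no real obstacle here, and indeed the phrase ``it is clear from the proof of the lemma'' is accurate: the reason the bound is $\theta_u$-free is rotational invariance. Conditional on $t_u$ and $\theta_u$, the relative angle $\theta_{u,v} = \theta_u - \theta_v$ is uniform on $(0, 2\pi]$ no matter what $\theta_u$ equals, because $\theta_v$ is itself uniform and independent of $\theta_u$. Hence the angular average $\hat{p}(t_u, t_v)$ that drives the computation in Lemma~\ref{lem:IndExp} already removes all dependence on both angles, and the subsequent integration $\int_0^R \hat{p}(t_u, t_v)\rho(t_v)\, dt_v$ produces a quantity depending on $t_u$ alone. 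Conditioning on $\theta_u$ therefore played no genuine role, so dropping it leaves the estimate and its constant unchanged.
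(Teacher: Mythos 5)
Your proposal is correct and matches the paper's intent exactly: the paper simply remarks that the corollary ``is clear'' from the proof of Lemma~\ref{lem:IndExp}, and your observation that the conditional estimate is $\theta_u$-free (by rotational invariance, since the relative angle $\theta_{u,v}$ is uniform regardless of $\theta_u$) so that averaging over $\theta_u$ leaves it unchanged is precisely the justification being invoked. Nothing is missing.
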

Now, Theorems~\ref{thm:CriticalRegime} and~\ref{thm:HotRegime} follow immediately from Corollary~\ref{cor:IndExpAng}. 
For the cold regime we have to work slightly more. 

\subsection{The cold regime $\beta > 1$}
Let $\hat{D}_u$ be a Poisson random variable with parameter equal to 
$T_u := \sum_{v \in V_N^u} \Prb {I_{uv}=1 \ | \ t_u} = (1+o(1))Ke^{\zeta t_u/2}$. 
With $d_{TV}(\cdot, \cdot)$ denoting the total variation distance between two random variables, we deduce the following lemma. 
\begin{lemma} \label{lem:PoissonApx}
Conditional on the value of $t_u$, we have 
$$ d_{TV} \left( D_u, \hat{D}_u \right) =o(1),$$
uniformly for all $t_u \leq R/2 - \omega (N)$.
\end{lemma}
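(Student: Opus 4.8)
The plan is to bound the total variation distance between the sum of independent indicators $D_u = \sum_{v \in V_N^u} I_{uv}$ and a Poisson random variable with the matching mean, by appealing to a standard Poisson approximation result. Conditional on $t_u$, the indicators $\{I_{uv}\}_{v \in V_N^u}$ are independent and identically distributed, each equal to $1$ with probability $p := \Prb{I_{uv}=1 \mid t_u}$. For such a sum the Le~Cam / Stein--Chernoff bound gives
\begin{equation*}
d_{TV}\!\left( \sum_{v \in V_N^u} I_{uv}, \, \mathsf{Poisson}(T_u) \right) \leq \sum_{v \in V_N^u} p^2 = (N-1)\,p^2,
\end{equation*}
where $T_u = (N-1)p$. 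So the entire task reduces to showing that $(N-1)p^2 = o(1)$ uniformly over the stated range of $t_u$.

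The second step is to invoke Corollary~\ref{cor:IndExpAng} in the cold regime, which yields $p = (1+o(1))\,K\,e^{\zeta t_u/2}/N$. Substituting this into the Le~Cam bound gives
\begin{equation*}
(N-1)\,p^2 = (1+o(1))\,K^2\,{e^{\zeta t_u}\over N},
\end{equation*}
so I need $e^{\zeta t_u}/N \to 0$ uniformly. Since $N = e^{\zeta R/2}$, the condition $e^{\zeta t_u} = o(N)$ is exactly $\zeta t_u < \zeta R/2 - \omega'(N)$ for some growing $\omega'$, i.e. $t_u < R/2 - \omega(N)/2$, which is guaranteed by the hypothesis $t_u \leq R/2 - \omega(N)$. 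This confirms that $(N-1)p^2 = O\!\left(e^{-\zeta\omega(N)}\right) = o(1)$, and the bound is uniform in $t_u$ over this range because the $o(1)$ term in the corollary is uniform.

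Finally, I should note that the parameter of the Poisson variable I produce via Le~Cam is $T_u = (N-1)p = (1+o(1))Ke^{\zeta t_u/2}$, which matches the definition of $\hat D_u$ in the lemma statement. The main obstacle, such as it is, is purely bookkeeping: verifying that the range $t_u \leq R/2 - \omega(N)$ is precisely the threshold making the error term vanish, and checking that uniformity of the $(1+o(1))$ in Corollary~\ref{cor:IndExpAng} transfers to uniformity of the total variation bound. The probabilistic content — that a sum of many tiny independent indicators is close in total variation to a Poisson of the same mean, with error controlled by the sum of squared success probabilities — is entirely standard, so no delicate argument is required beyond citing the Poisson approximation inequality and performing the substitution above.
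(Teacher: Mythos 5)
Your proposal is correct and follows essentially the same route as the paper: the paper also applies the standard Poisson approximation bound $d_{TV}(D_u,\hat D_u)\leq \sum_{v}\Prb{I_{uv}=1\mid t_u}^2$ (citing Theorem~2.9 of~\cite{bk:vdH}) together with Corollary~\ref{cor:IndExpAng}, and the range $t_u\leq R/2-\omega(N)$ gives the same bound $O(Ne^{-\zeta\omega(N)}/N)=o(1)$. The only detail the paper spells out that you pass over quickly is why the $I_{uv}$ remain i.i.d.\ after conditioning on $t_u$ alone (averaging over $\theta_u$ by rotational symmetry), but this does not affect the correctness of your argument.
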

\begin{proof}
Recall that $D_u = \sum_{v \in V_N^u} I_{uv}$ and conditional on $t_u$ and $\theta_u$ this is a sum of independent and identically 
distributed indicator random variables. Keeping $t_u$ fixed this is also the case when we average over $\theta_u \in (0,2\pi ]$.
Thus conditioning only on $t_u$ the family $\{ I_{uv} \}_{v \in V_N^u}$ is still a family of i.i.d. indicator random variables, whose 
expected values are given by Lemma~\ref{lem:IndExp}. 

By Theorem~2.9 in~\cite{bk:vdH} and Corollary~\ref{cor:IndExpAng} we have for $N$ sufficiently large 
$$ d_{TV} (D_u, \hat{D}_u ) \leq \sum_{v \in V_N^u} \Prb {I_{uv}=1 \ | \ t_u}^2 \leq 2 N \left({e^{\zeta R/4 - \zeta \omega (N)/2} \over 
N}\right)^2 = 2 N e^{-\zeta \omega (N)}~ {1\over N} = o(1).$$
\end{proof}
For any integer $k\geq 0$ we have 
\begin{equation} \label{eq:InitConditioning}
\begin{split}
& \Prb {D_u = k}  = {1\over 2\pi} \int_{0}^R \int_{0}^{2\pi} \Prb { D_u = k \ | \ t_u, \theta_u } \rho ( t_u ) d t_u d \theta_u \\
&= {1\over 2\pi } \int_{t_u \leq R/2 - \omega (N)}  \int_{0}^{2\pi} \Prb {D_u = k \ | \ t_u, \theta_u} \rho( t_u) d t_u d \theta_u \\
&+ {1\over 2\pi } \int_{t_u >R/2 - \omega (N)}  \int_{0}^{2\pi} \Prb {D_u = k \ | \ t_u, \theta_u} \rho(t_u) d t_u d \theta_u.
\end{split}
\end{equation}
We bound the second integral as follows. 
\begin{equation} \label{eq:2ndInt}
\begin{split} 
\int_{R/2 - \omega (N) < t_u} \Prb {D_u = k \ | \ t_u} \rho( t_u) d t_u \leq 
\int_{R/2 - \omega (N) < t_u} \rho( t_u) d t_u = o(1).
\end{split}
\end{equation}
We will use Lemma~\ref{lem:PoissonApx} to approximate the first integral. 
\begin{equation} \label{eq:1stInt1stSub}
\begin{split}
&\int_{t_u \leq R/2 - \omega (N)} \Prb {D_u = k \ | \ t_u} \rho( t_u) d t_u = \int_{t_u \leq R/2 - \omega (N)} \Prb {\hat{D}_u = k} 
\rho( t_u) d t_u + o(1) \\
& = \int_{t_u \leq R} \Prb {\hat{D}_u = k} \rho( t_u) d t_u + o(1).
\end{split}
\end{equation}
But recall that $\Prb {\hat{D}_u = k} = \Prb {\mathsf{Po}\left( T_u \right) = k}$. 
Let $K_N = (1+o(1))K$ denote the factor of $e^{\zeta t_u/2}$ in the expression of $T_u$. 
If $t < K$, then $\Prb {T_u \leq t} \rightarrow 0$ as $N\rightarrow \infty$. 
However, for any $t\geq K$ we have 
\begin{equation*}
\begin{split} 
&\Prb {T_u \leq t} = \Prb { t_u \leq {2\over \zeta}~ \ln {t\over K_N}} = 
\alpha \int_{0}^{{2\over \zeta}~ \ln {t\over K_N}} {\sinh (\alpha (R-x)) \over \cosh (\alpha R) -1} dx \\ 
&= \alpha \int_{R-{2\over \zeta}~ \ln {t\over K_N}}^R {\sinh (\alpha x) \over \cosh (\alpha R) - 1} dx \\
&= {1\over \cosh (\alpha R) -1} \left[ \cosh (\alpha R) - \cosh \left( \alpha R-{2\alpha \over \zeta}~ \ln {t\over K_N}\right) \right] 
= 1 - \left( {K \over t} \right)^{2\alpha \over \zeta} + o(1). 
\end{split}
\end{equation*}
In other words, 
$$ \Prb {T_u \leq t} \rightarrow F(t), \ \mbox{as $N \rightarrow \infty$}.$$
Thus, 
$$ \int_{t_u \leq R/2 - \omega (N)} \Prb {D_u = k \ | \ t_u} \rho( t_u) d t_u  \rightarrow \Prb {\MP (F) = k}, \ 
\mbox{as}\ N \rightarrow \infty.$$
The above together with (\ref{eq:2ndInt}) and (\ref{eq:InitConditioning}) complete the proof of Theorem~\ref{thm:DegreeDistribution}.

\subsubsection{Power laws}

We close this section with a simple calculation proving that $\Prb {\MP (F) = k}$ has power-law behaviour with exponent $2\alpha /\zeta +1$ 
as $k$ grows. 
\begin{lemma}
We have
$$ \Prb {\MP (F) = k} \rightarrow {2\alpha \over \zeta} K^{2\alpha /\zeta}~ k^{-2\alpha /\zeta -1} \ \mbox{as} \ k, N \rightarrow \infty.$$
\end{lemma}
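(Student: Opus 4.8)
The plan is to treat $\MP(F)$ concretely as a two-stage mixture: draw $T$ from the Pareto law $F$ and then, conditional on $T$, take a Poisson variable with mean $T$. Writing $\gamma := 2\alpha/\zeta$, the density of $F$ on $[K,\infty)$ is $f(t) = \gamma K^\gamma t^{-\gamma-1}$, so the mixed Poisson mass function is
$$\Prb{\MP(F) = k} = \int_K^\infty \frac{t^k e^{-t}}{k!}\, f(t)\, dt = \frac{\gamma K^\gamma}{k!}\int_K^\infty t^{k-\gamma-1} e^{-t}\, dt.$$
Note that the asymptotics here are purely in $k$: the distribution $F$ carries no dependence on $N$, and the $N$ in the statement is inherited only from the $o(1)$ error in the preceding theorem. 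So I would read the claim as a statement about $k \to \infty$.

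First I would recognise $\int_0^\infty t^{k-\gamma-1}e^{-t}\,dt = \Gamma(k-\gamma)$ (valid once $k > \gamma$) and split off the contribution of the interval $[0,K]$, which I claim is negligible. Indeed $\int_0^K t^{k-\gamma-1}e^{-t}\,dt \le \int_0^K t^{k-\gamma-1}\,dt = K^{k-\gamma}/(k-\gamma)$, and comparing this with $\Gamma(k-\gamma)$ via Stirling shows the ratio decays super-exponentially in $k$ (it behaves like $(eK/(k-\gamma))^{k-\gamma}$ up to polynomial factors). Hence
$$\int_K^\infty t^{k-\gamma-1}e^{-t}\,dt = \Gamma(k-\gamma)\,(1 + o(1)).$$

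Next I would substitute $k! = \Gamma(k+1)$ and invoke the standard ratio asymptotic $\Gamma(k-\gamma)/\Gamma(k+1) \sim k^{-\gamma-1}$ as $k \to \infty$, an instance of $\Gamma(k+a)/\Gamma(k+b)\sim k^{a-b}$ with $a=-\gamma$ and $b=1$. Combining the two displays gives
$$\Prb{\MP(F) = k} = (1+o(1))\,\gamma K^\gamma\,\frac{\Gamma(k-\gamma)}{\Gamma(k+1)} \sim \gamma K^\gamma\, k^{-\gamma-1} = \frac{2\alpha}{\zeta}\,K^{2\alpha/\zeta}\,k^{-2\alpha/\zeta-1},$$
which is exactly the claim.

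I do not expect a serious obstacle: both the truncation bound and the gamma-ratio asymptotic are routine. The only point needing a little care is to ensure the error estimate yields a genuine asymptotic equivalence rather than a mere upper bound; but since the $[0,K]$ tail is dominated by $K^{k-\gamma}/(k-\gamma)$, which is $o(\Gamma(k-\gamma))$, the equivalence $\sim$ follows at once.
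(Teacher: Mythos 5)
Your argument is correct and follows essentially the same route as the paper: both write the mixed Poisson mass as $\frac{\gamma K^{\gamma}}{k!}\int_K^{\infty}t^{k-\gamma-1}e^{-t}\,dt$, complete the integral to $\Gamma(k-\gamma)$ while showing the $[0,K]$ remainder is negligible against it, and finish with the Stirling/gamma-ratio asymptotic $\Gamma(k-\gamma)/k!\sim k^{-\gamma-1}$. Your observation that the asymptotics are genuinely only in $k$ (since $F$ and $K$ do not depend on $N$) is a fair reading of the statement.
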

\begin{proof}
The pdf of $F(t)$ for $t > K$ is equal to ${2\alpha \over \zeta}~K^{2\alpha /\zeta}/t^{-2\alpha /\zeta - 1}$ and equal to 0 otherwise. 
Thus we have 
\begin{equation} \label{eq:PrbMP}
\begin{split}
\Prb {\MP (F) = k} &={2\alpha \over \zeta}~K^{2\alpha /\zeta}~ \int_K^\infty e^{-t}~{t^k \over k!}~t^{-2\alpha /\zeta -1} dt = 
{2\alpha \over \zeta}~K^{2\alpha /\zeta}~{1\over k!}~ \int_K^\infty e^{-t}~t^{k-2\alpha /\zeta - 1} dt \\
&= {2\alpha \over \zeta}~K^{2\alpha /\zeta}~{1\over k!}~\left( \Gamma (k-2\alpha /\zeta ) - \int_0^{K} 
e^{-t}~t^{k-2\alpha /\zeta - 1} dt \right).
\end{split}
\end{equation}
Note now that the last integral is $O(K^k)$ and therefore, as $k\rightarrow \infty$, we have 
$$ {\int_0^{K} e^{-t}~t^{k+2\alpha /\zeta +1} dt \over k!} = O \left( {K^k \over k!} \right). $$
Now using the standard asymptotics for the Gamma function we have 
$$ \Gamma (k-2\alpha /\zeta ) = (1+o(1)) \sqrt{2\pi (k - 2\alpha /\zeta - 1)}~e^{-k + 2\alpha /\zeta + 1}~\left( k-2\alpha /\zeta -1 \right)^{k-2\alpha /\zeta -1}, $$
and also $k!= (1+o(1))\sqrt{2\pi k}e^{-k}k^k$.
Thus, 
\begin{equation*} \begin{split}{\Gamma (k-2\alpha /\zeta) \over k!} &= (1+o(1)) e^{2\alpha /\zeta + 1} \left( 1 - {2\alpha /\zeta + 1\over
 k} \right)^{k - 2\alpha /\zeta  - 1}~k^{-(2\alpha /\zeta +1)} \\
&=  (1+o(1))~k^{-(2\alpha /\zeta +1)}.
\end{split}
\end{equation*}
Thus, (\ref{eq:PrbMP}) now yields
$$\Prb {\MP (F) = k} = {2\alpha \over \zeta}~K^{2\alpha /\zeta}~k^{-(2\alpha /\zeta +1)}(1 +o(1)).$$
\end{proof}

\section{Asymptotic correlations of degrees} \label{sec:Corr}
In this section, we deal with the correlations of the degrees in the cold regime. We show that the degrees of any finite collection 
of vertices are asymptotically independent.  
\begin{theorem} \label{thm:Correlations} 
Let $\beta > 1$ and $0 < \zeta /\alpha < 2$.
For any integer $m \geq 2$ and for any collection of $m$ vertices $v_1, \ldots, v_m$ their degrees $D_{v_1},\ldots, D_{v_m}$ are
asymptotically independent. 
\end{theorem}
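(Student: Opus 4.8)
The plan is to prove asymptotic independence via the method of factorial moments, showing that the joint factorial moments of $D_{v_1},\ldots,D_{v_m}$ converge to the product of the limiting factorial moments of the individual mixed-Poisson limits. Since each $D_{v_i}$ converges in distribution to $\MP(F)$ (Theorem~\ref{thm:DegreeDistribution}) and the mixed-Poisson law is determined by its moments, convergence of the joint factorial moments to the product suffices to establish joint convergence of $(D_{v_1},\ldots,D_{v_m})$ to a vector of \emph{independent} $\MP(F)$ random variables. The reason independence should emerge, despite the edges not being conditionally independent, is that the vertices $v_1,\ldots,v_m$ together with any fixed finite set of common neighbours occupy a vanishing fraction of the angular space, so the events that the $v_i$ connect to a typical third vertex are asymptotically uncorrelated.

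First I would condition on the types $t_{v_1},\ldots,t_{v_m}$ and the angular coordinates $\theta_{v_1},\ldots,\theta_{v_m}$. By Corollary~\ref{cor:EffectiveArea} we may assume all types are at most $x_0$, and by Lemma~\ref{lem:PoissonApx} each degree is well approximated by a Poisson variable with mean $(1+o(1))Ke^{\zeta t_{v_i}/2}$. The key computation is the joint factorial moment
\begin{equation*}
\Ex{\prod_{i=1}^m (D_{v_i})_{k_i}} = \Ex{\prod_{i=1}^m \sum_{\substack{S_i \subseteq V_N^{v_i} \\ |S_i|=k_i}} \prod_{w \in S_i} I_{v_i w}},
\end{equation*}
where $(D)_k = D(D-1)\cdots(D-k+1)$ denotes the falling factorial. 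The dominant contribution comes from terms in which the index sets $S_1,\ldots,S_m$ are pairwise disjoint and disjoint from $\{v_1,\ldots,v_m\}$; for such terms the corresponding indicator products involve distinct pairs, and conditional on all the relevant types and angles one integrates the connection probabilities. The point is that the off-diagonal terms, where some third vertex $w$ lies in two different $S_i$, or where some $v_j$ itself plays the role of a neighbour, contribute only $o(1)$ because they involve at most $N^{m-1}$ summands rather than $N^m$, each term being $O(1/N)$ smaller.

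The main obstacle will be controlling the correlation alluded to in the remark preceding Section~\ref{sec:Corr}: conditioning on $v_1$ and $v_2$ sharing a common neighbour $u$ increases the chance that $v_1$ and $v_2$ are adjacent, so the indicator variables $\{I_{v_i w}\}$ are \emph{not} independent across $i$ even after conditioning on types. I would handle this by quantifying that these dependencies are geometrically negligible: a common neighbour $w$ of both $v_i$ and $v_j$ must lie within angular distance $O(A^{-1})$ of \emph{both}, which forces $v_i$ and $v_j$ to be angularly close, an event of probability $o(1)$ for a typical pair; and after integrating out the angles of $w$, the joint connection probability factorizes up to a $(1+o(1))$ factor. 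Concretely, I would show
\begin{equation*}
\Prb{I_{v_i w}=1,\, I_{v_j w}=1 \mid t_{v_i},t_{v_j},t_w} = (1+o(1))\,\Prb{I_{v_i w}=1\mid t_{v_i},t_w}\,\Prb{I_{v_j w}=1\mid t_{v_j},t_w}
\end{equation*}
for almost all angular configurations, using Lemma~\ref{lem:Dist} to linearize the distances and Lemma~\ref{lem:AngleAv} to evaluate the angular integrals.

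Finally, I would assemble these pieces: the diagonal-free part of the joint factorial moment converges to $\prod_{i=1}^m \Ex{(\MP(F))_{k_i}}$ after integrating the individual type distributions against the mixing law $F$ (exactly as in the single-vertex computation of Theorem~\ref{thm:DegreeDistribution}), while the correction terms vanish. Since the moments of $\MP(F)$ grow slowly enough that $\MP(F)$ is determined by its moment sequence (its tail is a power law, so all moments of sufficiently low order exist and the Carleman-type condition is met for the truncated moments needed here), joint convergence of the factorial moments to the product form yields asymptotic independence of $D_{v_1},\ldots,D_{v_m}$, completing the proof of Theorem~\ref{thm:Correlations}.
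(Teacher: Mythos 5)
There is a genuine gap at the final step: the method of moments is not available for this limit. The mixing law has tail $1-F(t)=(K/t)^{2\alpha/\zeta}$, so the factorial moments satisfy $\Ex{(\MP(F))_k}=\Ex{T^k}$ with $T\sim F$, and these are \emph{infinite} for every $k\geq 2\alpha/\zeta$; for instance, when $\zeta\geq\alpha$ already the second factorial moment diverges, consistent with the power-law exponent $2\alpha/\zeta+1\leq 3$ from Theorem~\ref{thm:DegreeDistribution}. A distribution with infinitely many infinite moments is not determined by its moment sequence, the quantities $\prod_{i=1}^m\Ex{(\MP(F))_{k_i}}$ you propose to identify as limits are generically infinite, and convergence of the finitely many finite joint factorial moments does not imply joint convergence in distribution; the parenthetical appeal to a ``Carleman-type condition for truncated moments'' is not a recognised argument and cannot be repaired as stated. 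A secondary problem is the displayed factorization $\Prb{I_{v_iw}=1,\,I_{v_jw}=1\mid t_{v_i},t_{v_j},t_w}=(1+o(1))\Prb{I_{v_iw}=1\mid\cdot}\,\Prb{I_{v_jw}=1\mid\cdot}$: it is false pointwise, since for $\theta_{v_iv_j}$ bounded away from $0$ the left-hand side is of order $N^{-1-\beta}$ while the right-hand side is of order $N^{-2}$, and for $\theta_{v_iv_j}=O(1/N)$ the left-hand side exceeds the right-hand side by a factor of order $N$. What you actually need for the off-diagonal terms is only the averaged bound $\Ex{I_{v_iw}I_{v_jw}}=o(1/N)$, but establishing it requires controlling the contribution of large types (where $\Ex{e^{\zeta t_w}}$ diverges for $\zeta\geq\alpha$), which your sketch does not address.

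The moment strategy could be salvaged by keeping everything conditional: given the (independent) types $t_{v_1},\ldots,t_{v_m}$ the conditional degrees have bounded means, the conditional limits are Poisson and hence moment-determinate, and conditional asymptotic independence can then be integrated over the types at the level of \emph{probabilities} rather than moments. This is essentially what the paper does, though without moments at all: it introduces the ``vital area'' of each $v_i$ (an angular sector of width $\omega(N)A^{-1}_{v_i,w}$), shows that a.a.s.\ these sectors are pairwise disjoint (Claim~\ref{clm:DisjointAreas}) and that a.a.s.\ no $v_i$ has a neighbour outside its own sector (Lemma~\ref{lem:B1Apx}); conditional on these events the neighbour counts form a multinomial over disjoint regions, whose joint law factorizes into the product of the Poisson approximations of Lemma~\ref{lem:PoissonApx}, and only at that point is the type integrated out. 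I recommend you restructure your argument along one of these two conditional routes.
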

The proof of the above result together with Chebyschev's inequality yield the concentration of the number of vertices of any fixed degree and 
complete the proof of Theorem~\ref{thm:Concentration}. 
  
We proceed with the proof of Theorem~\ref{thm:Correlations}. 
Let us fix $m\geq 2$ distinct vertices $v_1,\ldots, v_m$. Let $k_1,\ldots, k_m$ be non-negative integers.  We will show that 
\begin{equation} \label{eq:2ndThmToProve}
\Prb {D_{v_1}=k_1,\ldots, D_{v_m} = k_m} \rightarrow \Prb {D_{v_1}=k_1} \cdots \Prb {D_{v_m} = k_m}, \ \mbox{as $N \rightarrow \infty$},
\end{equation}
where the convergence is uniform over all choices of the $m$ vertices. 
The proof of Lemma~\ref{lem:AngleAv} suggests that there exists a specific region around each vertex such that if another vertex 
is located outside it, then the probability that the two vertices are joined becomes much less than the estimate given in 
Lemma~\ref{lem:AngleAv}. In other words, this region is where a vertex is most likely to have its neighbours in -- 
see Figure~\ref{fig:vital}. 
\begin{definition} 
For a vertex $u \in V_N$ such that $t_u \leq x_0$, we let $A_u$ be the set of 
points $\{ w \in \D \ : \ t_w \leq x_0, \theta_{uw} \leq \min \{ \pi, \hat{\theta}_{u,w}' \} \}$, where $\hat{\theta}_{u,w}' = \omega(N) A^{-1}_{u,w}$. 
We call this the \emph{vital area} of vertex $u$. 
\end{definition}
\begin{figure}[htp] 
\begin{center}
\includegraphics[scale=0.6]{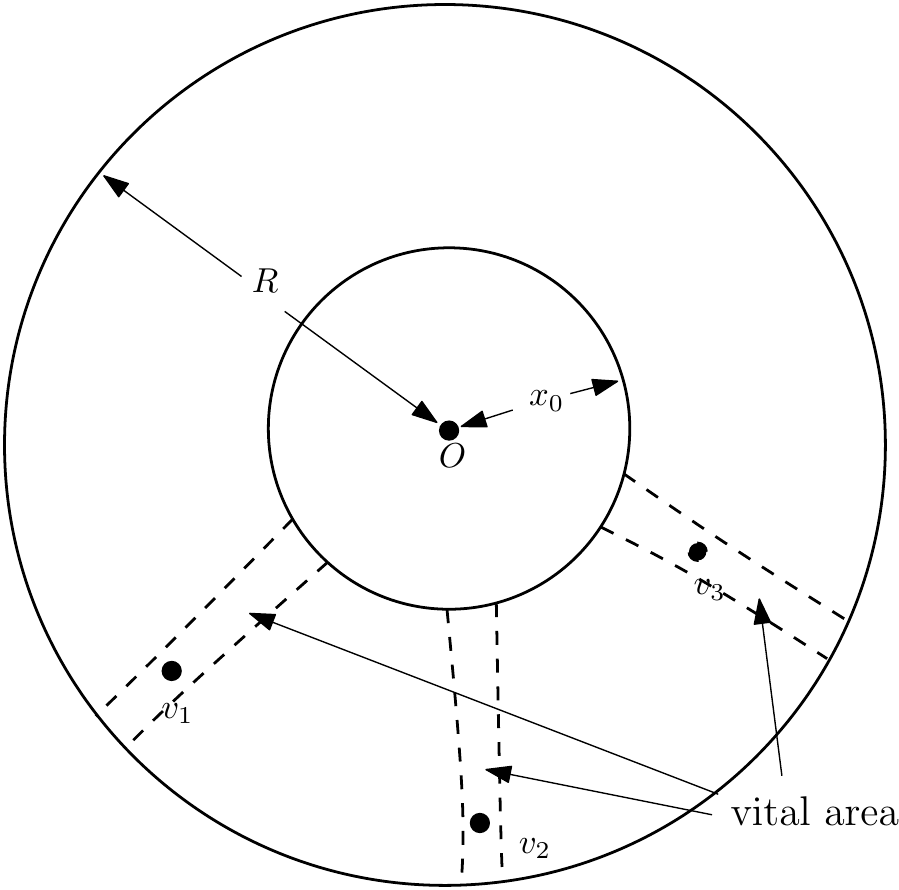}
\caption{The vital area of vertices in $\D$}
\label{fig:vital}
\end{center}
\end{figure}

We begin our analysis proving that the vital areas $A_{u_i}$, for $i=1,\ldots, m$, are mutually disjoint with high probability. 
We let $\E_1$ be this event. Though $m$ is meant to be fixed, the following claim is also valid for $m$ growing as a function of $N$.
\begin{claim} \label{clm:DisjointAreas} If $m \leq {\sqrt{N^{1-\zeta /(2\alpha )}} \over \omega (N) e^{\zeta \omega (N)/2}}$, then 
$\Prb {\E_1} = 1-o(1)$. 
\end{claim}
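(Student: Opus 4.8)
The plan is to reduce the claim to a single pairwise estimate followed by a union bound. By Corollary~\ref{cor:EffectiveArea}, a.a.s.\ every vertex---in particular each of $u_1,\dots,u_m$---has type at most $x_0$, so all the vital areas are well defined; I would work on this event, which costs only an additive $o(1)$. Since $\E_1$ holds as soon as no two of the $m$ vital areas meet, it suffices to bound $\Prb{A_{u_i}\cap A_{u_j}\neq\emptyset}$ for a fixed pair and then multiply by $\binom{m}{2}$.

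For a fixed pair $u_i,u_j$ I would first convert the set-theoretic disjointness condition into a condition on their angular separation. Recall that $\hat{\theta}'_{u,w}=\omega(N)A_{u,w}^{-1}=\omega(N)N^{-1}e^{\zeta t_u/2}e^{\zeta t_w/2}$ is increasing in $t_w$. If some point $w$ with $t_w\le x_0$ lies in both vital areas, then $\theta_{u_i w}\le \hat{\theta}'_{u_i,w}$ and $\theta_{u_j w}\le \hat{\theta}'_{u_j,w}$, so the triangle inequality for the angular (circular) distance gives $\theta_{u_i u_j}\le \hat{\theta}'_{u_i,w}+\hat{\theta}'_{u_j,w}$. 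Maximising the right-hand side over admissible $w$, i.e.\ at $t_w=x_0$, yields the sufficient condition for disjointness
\[ \theta_{u_i u_j} > \frac{\omega(N)}{N}\,e^{\zeta x_0/2}\bigl(e^{\zeta t_{u_i}/2}+e^{\zeta t_{u_j}/2}\bigr). \]

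The probability that this fails is then easy to estimate: conditional on $t_{u_i},t_{u_j}$, the relative angle $\theta_{u_i u_j}$ is uniform on $[0,\pi]$ with density $1/\pi$, so the failure probability is at most $\frac{1}{\pi}$ times the threshold above (a bound that stays valid even when the threshold exceeds $\pi$). Integrating over the two independent types, I would use that $\Ex{e^{\zeta t/2}}=\frac{2\alpha}{2\alpha-\zeta}+o(1)=O(1)$, which is exactly where the hypothesis $\zeta/\alpha<2$ enters: it makes $\int_0^R e^{\zeta t/2}\rho(t)\,dt$ converge, with the integral dominated by small $t$. Combined with $e^{\zeta x_0/2}=N^{\zeta/(2\alpha)}e^{\zeta\omega(N)/2}$, this gives for each pair $\Prb{A_{u_i}\cap A_{u_j}\neq\emptyset}=O\bigl(\omega(N)e^{\zeta\omega(N)/2}N^{\zeta/(2\alpha)-1}\bigr)$, where the exponent $\zeta/(2\alpha)-1$ is negative precisely because $\zeta/\alpha<2$.

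Finally I would union bound over the $\binom{m}{2}\le m^2/2$ pairs and substitute the hypothesis $m\le \sqrt{N^{1-\zeta/(2\alpha)}}/(\omega(N)e^{\zeta\omega(N)/2})$, equivalently $m^2\le N^{1-\zeta/(2\alpha)}/(\omega(N)^2 e^{\zeta\omega(N)})$; the powers of $N$ then cancel and the bound collapses to $O\bigl(1/(\omega(N)e^{\zeta\omega(N)/2})\bigr)=o(1)$, giving $\Prb{\E_1}=1-o(1)$. I expect the only delicate point to be the geometric step: since the vital half-width $\hat{\theta}'_{u,w}$ depends on the type $t_w$ of the \emph{other} point and can become large when both types approach $x_0$, one must (a) take the worst case $t_w=x_0$ when deriving the angular separation needed for disjointness, and (b) keep the linear tail bound for the uniform angle valid even where the threshold is $\ge\pi$. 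The integrability of $e^{\zeta t/2}$ against $\rho$ then guarantees that these rare large-type configurations contribute negligibly to the final estimate.
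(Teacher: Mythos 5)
Your argument is correct and follows essentially the same route as the paper: reduce disjointness of two vital areas to the angular-separation threshold $\frac{\omega(N)}{N}e^{\zeta x_0/2}\bigl(e^{\zeta t_{u_i}/2}+e^{\zeta t_{u_j}/2}\bigr)$, bound the failure probability by $1/\pi$ times that threshold, integrate against the type density using $\zeta/\alpha<2$, and union bound over the $\binom{m}{2}$ pairs. The only (harmless) difference is that the paper additionally truncates the types at $R(1-\zeta/(2\alpha))-2\omega(N)$ so that the threshold is $o(1)$, paying an extra $m\cdot O\bigl(N^{1-2\alpha/\zeta}e^{2\alpha\omega(N)}\bigr)$ term in the union bound, whereas you sidestep this by noting that the linear tail bound for the uniform angle stays valid even when the threshold exceeds $\pi$.
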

\begin{proof} 
Let $i \in \{ 1,\ldots, m\}$ and assume that $t_{v_i} < R (1 - \zeta /(2\alpha)) - 2\omega (N)$. 
Considering all vertices $w \in A_{v_i}$ the parameter $\hat{\theta}_{v_i,w}'$ is maximised when 
$t_w = x_0$. So let $\hat{\theta}_{v_i}'$ be this maximum, that is, 
$$\hat{\theta}_{v_i}'= {\omega (N) \over N} \exp \left({\zeta \over 2\alpha } \left(
t_{v_i} + x_0 \right) \right) = {\omega (N) \over N}~e^{\zeta t_{v_i}/2} N^{\zeta /(2\alpha )} e^{\zeta \omega (N)/2} 
= {\omega (N) \over N^{1-\zeta /(2\alpha )}}~ e^{\zeta \omega (N)/2}~e^{\zeta t_{v_i}/2} . $$ 
Observe that our assumption on $t_{v_i}$ implies that 
$$ e^{\zeta t_{v_i}/2} < e^{{\zeta R\over 2}~\left( 1 - {\zeta \over 2 \alpha}  \right) - \zeta \omega (N)} = N^{1-\zeta /(2\alpha )}
 e^{-\zeta \omega (N)}, $$ 
whereby $\hat{\theta}_{v_i}' = o(1)$. 
Thus for $i \not = j$, with $t_{v_i}, t_{v_j} < R (1 - \zeta /(2\alpha)) - 2\omega (N)$ we have $A_{v_i} \cap A_{v_j} \not = \emptyset$ if 
$$ \theta_{v_i v_j} \leq \hat{\theta}_{v_i}'+ \hat{\theta}_{v_j}' = {\omega (N) \over N^{1-\zeta /(2\alpha )}}~e^{\zeta \omega (N)/2}
\left( e^{\zeta t_{v_i}/2} + e^{\zeta t_{v_j}/2}  \right).$$

The probability that this occurs for two given distinct indices $i, j$ is crudely bounded for $N$ large enough as follows:  
\begin{equation*}
\begin{split}
&{1\over \pi}~{\omega (N) \over N^{1-\zeta /(2\alpha )}}~e^{\zeta \omega (N)/2}
\int_0^{x_0} \int_0^{x_0} \left( e^{\zeta t_{v_i}/2} + e^{\zeta t_{v_j}/2}  \right) \rho (t_{v_i})  \rho ( t_{v_j}) dt_{v_i} dt_{v_j} \\
&\leq {2\alpha^2 \over \pi}~{\omega (N) \over N^{1-\zeta /(2\alpha )}}~e^{\zeta \omega (N)/2}
\int_0^{x_0} \int_0^{x_0} \left( e^{\zeta t_{v_i}/2} + e^{\zeta t_{v_j}/2}  \right) e^{-\alpha t_{v_i}} e^{-\alpha t_{v_j}} dt_{v_i} dt_{v_j} \\
&\leq {4\alpha^2 \over \pi}~{\omega (N) \over N^{1-\zeta /(2\alpha )}}~e^{\zeta \omega (N)/2} 
\int_{0}^{x_0} e^{- \left( \alpha  - {\zeta \over 2} \right) x} dx 
\leq {4\alpha^2 \over \pi}~{2 \over 2\alpha  - \zeta}~{\omega (N) \over N^{1-\zeta /2}}~e^{\zeta \omega (N)/2}. 
\end{split}
\end{equation*}
Also, by Lemma~\ref{lem:TypeDistr} for a vertex $v$ we have 
\begin{equation*}
\begin{split}
\Prb { t_{v} \geq R\left( 1 - {\zeta \over 2 \alpha}  \right) - 2 \omega (N)} &= e^{-\alpha R \left( 1 - {\zeta \over 2 \alpha}  \right) + 2\alpha \omega (N) } + O \left( N^{-2\alpha /\zeta} \right) \\ 
& = N^{- {2\alpha \over \zeta}\left( 1 - {\zeta \over 2 \alpha}  \right)}e^{2\alpha \omega (N)} +  O \left( N^{-2\alpha /\zeta} \right) \\ 
&= N^{1 - {2\alpha \over \zeta}} e^{2\alpha \omega (N)} + O \left( N^{-2\alpha /\zeta} \right) .
\end{split}
\end{equation*}
Assume now that $m \leq {\sqrt{N^{1-\zeta /(2\alpha )}} \over \omega (N) e^{\zeta \omega (N)/2}}$. 
Thus the probability that there exists a pair of distinct vertices $v_i, v_j$ with $i,j = 1,\ldots, m$ such that 
$A_{v_i} \cap A_{v_j} \not = \emptyset$ is bounded by  
$$m^2 O \left( {\omega (N) \over N^{1-\zeta /(2\alpha )}}~e^{\zeta \omega (N)/2} \right) 
+ m O \left( N^{1 - {2\alpha \over \zeta}} e^{2\alpha \omega (N)} \right)=o(1).$$ 
\end{proof}
We assume that $m\geq 2$ is fixed and we condition on the event that $t_{v_i} \leq x_0$ for all $i = 1,\ldots ,m$ (which we denote by
$\mathcal{T}_1$) as well as on the event $\E_1$. By Corollary~\ref{cor:EffectiveArea} and Claim~\ref{clm:DisjointAreas} both events occur 
with probability $1-o(1)$. 

For a vertex $w \not \in \{v_1,\ldots, v_m\}$ we denote by $\A^w_{v_i}$ the event that $w$ is located within $A_{v_i}$ and it 
is adjacent to $v_i$. 
In what follows, we drop the superscript $w$ as the probability of $\A_{v_i}^w$ is the same for all $w$. 

Now, let us consider the event that $k_i$ vertices satisfy the event $\A_{v_i}$, for $i=1,\ldots, m$, whereas all other 
vertices do not. We denote this event by $\A (k_1,\ldots, k_m)$. 
Also, for every $i=1,\ldots, m$ let $\tilde{\A}_{v_i}^w$ be the event that a certain vertex $w$ is located outside $A_{v_i}$ and 
is adjacent to $v_i$.  We let $\mathcal{B}_1$ be the event $\cup_{w \in V_{N}^{v_1,\ldots , v_m}} \cup_{i=1}^m \tilde{\A}_{v_i}^w$,
that is, the event that there exists a vertex $w \in [N] \setminus \{v_1,\ldots , v_m \}$ which is adjacent to $v_i$, for some $i=1,\ldots, m$, 
but it is located outside $A_{v_i}$. 
Thus conditional on $\E_1 \cap \mathcal{T}_1$, 
if the event $\mathcal{B}_1$ is \emph{not} realized, then the event that vertex $v_i$ has degree $k_i$, for all 
$i=1,\ldots, m$ is realized if and only if $\A (k_1,\ldots, k_m)$ is realized. 
Using the union bound, we will show that 
\begin{equation} \label{eq:B1} \Prb{\mathcal{B}_1} = o(1).\end{equation} 
(We will show this without any conditioning.)
Thereby, we can deduce the following:
\begin{equation} \label{eq:ToDeduce}
\begin{split}
&\Prb {D_{v_1}=k_1,\ldots, D_{v_m} = k_m} = \Prb {D_{v_1}=k_1,\ldots, D_{v_m} = k_m \ | \ \E_1 \cap \mathcal{T}_1} +o(1) \\
 & \stackrel{(\ref{eq:B1})}{=} 
\Prb {D_{v_1}=k_1,\ldots, D_{v_m} = k_m \ | \ \E_1, \mathcal{T}_1, \overline{\mathcal{B}_1}} + o(1) \\
 &= \Prb {A (k_1,\ldots, k_m)  \ | \ \E_1, \mathcal{T}_1, \overline{\mathcal{B}_1}} + o(1) \\
 &= \Prb {A (k_1,\ldots, k_m), \overline{\mathcal{B}_1} \ | \ \E_1, \mathcal{T}_1} / \Prb {\overline{\mathcal{B}_1} \ | \ \E_1, \mathcal{T}_1} + o(1) \\
&=  \Prb {A (k_1,\ldots, k_m), \overline{\mathcal{B}_1} \ | \ \E_1, \mathcal{T}_1} + o(1) \\
& = \Prb {A (k_1,\ldots, k_m) \ | \ \E_1, \mathcal{T}_1} - \Prb {A (k_1,\ldots, k_m), \mathcal{B}_1 \ | \ \E_1, \mathcal{T}_1} + o(1) \\
& = \Prb {A (k_1,\ldots, k_m) \ | \ \E_1, \mathcal{T}_1} +o(1).
\end{split}
\end{equation}
We will show further that $\Prb {A (k_1,\ldots, k_m) \ | \ \E_1 \cap \mathcal{T}_1}$ is asymptotically equal to the product of the probabilities that $D_{v_i}=k_i$, over 
$i=1,\ldots, m$.
\begin{lemma} \label{lem:ProperEvent} 
Let $\beta >1$ and $0 < \zeta /\alpha  < 2$. Assume that $m\geq 2$ and $k_1, \ldots, k_m \geq 0$ are integers. 
Then we have 
\begin{equation*}
\Prb {\A (k_1,\ldots, k_m) \ | \ \E_1 \cap \mathcal{T}_1} = (1+o(1)) \prod_{i=1}^m \Prb {D_{v_i} = k_i}.
\end{equation*}
\end{lemma}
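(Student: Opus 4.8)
The plan is to use the disjointness of the vital areas, guaranteed by conditioning on $\E_1$, to turn the joint event $\A(k_1,\ldots,k_m)$ into a multinomial occupancy problem and then to Poissonise. First I would fix the types $t_{v_1},\ldots,t_{v_m}$ and the angles of the $m$ special vertices (all with $t_{v_i}\le x_0$ by $\mathcal{T}_1$, with pairwise disjoint vital areas by $\E_1$). Since $\E_1$ and the regions $A_{v_i}$ are determined by the positions of $v_1,\ldots,v_m$ alone, the remaining $N-m$ vertices are, conditionally, independent and identically distributed. For one such vertex $w$ and each $i$, set $q_i:=\Prb{\A^w_{v_i}\ |\ t_{v_i}}$, the probability that $w$ falls in $A_{v_i}$ and is adjacent to $v_i$; by rotational symmetry this depends only on $t_{v_i}$. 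The key structural point is that disjointness of the $A_{v_i}$ makes the events $\A^w_{v_1},\ldots,\A^w_{v_m}$ mutually exclusive for each fixed $w$, so the counts $N_i$ of other vertices satisfying $\A_{v_i}$ form a multinomial vector with parameters $N-m$ and $(q_1,\ldots,q_m)$ (together with a ``no bucket'' class of probability $1-\sum_i q_i$), and $\A(k_1,\ldots,k_m)$ is exactly the event $\{N_1=k_1,\ldots,N_m=k_m\}$.

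Next I would pin down the cell probabilities. The whole purpose of the vital area is that in the cold regime it captures all but an $o(1)$ fraction of the adjacency mass: restricting the angular integral of Lemma~\ref{lem:AngleAv} to $\theta\le\omega(N)A_{u,w}^{-1}$ discards only a negligible tail when $\beta>1$, so $q_i=(1+o(1))\Prb{I_{v_iw}=1\ |\ t_{v_i}}$ and hence, by Corollary~\ref{cor:IndExpAng}, $Nq_i=(1+o(1))Ke^{\zeta t_{v_i}/2}=(1+o(1))T_{v_i}$. In particular each $q_i=\Theta(1/N)$ and $\sum_i q_i\to 0$. I would then run the standard small-probability expansion of the multinomial probability, using $(N-m)!/N_0!=(1+o(1))N^{\sum k_i}$, $q_i^{k_i}=(1+o(1))(T_{v_i}/N)^{k_i}$ and $(1-\sum_i q_i)^{N_0}=(1+o(1))e^{-\sum_i T_{v_i}}$, to obtain
\begin{equation*}
\Prb{N_1=k_1,\ldots,N_m=k_m\ |\ t_{v_1},\ldots,t_{v_m}}=(1+o(1))\prod_{i=1}^m e^{-T_{v_i}}\frac{T_{v_i}^{k_i}}{k_i!},
\end{equation*}
so that conditionally the counts behave like independent Poissons with means $T_{v_i}$, mirroring the single-vertex approximation of Lemma~\ref{lem:PoissonApx}.

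Finally I would average over the types. Because the $t_{v_i}$ are independent and each Poisson factor depends on a single $t_{v_i}$, the expectation of the product factorises into a product of expectations; moreover, conditioning on $\E_1\cap\mathcal{T}_1$ perturbs the law of each $t_{v_i}$ by only $o(1)$ in total variation, since $\Prb{\E_1\cap\mathcal{T}_1}=1-o(1)$ and $\E_1$ merely biases the types mildly downward. Each resulting factor is $\Ex{e^{-T_{v_i}}T_{v_i}^{k_i}/k_i!}$, which is precisely the mixed-Poisson probability $\Prb{\MP(F)=k_i}$ identified in the proof of Theorem~\ref{thm:DegreeDistribution}; combined with $\Prb{D_{v_i}=k_i}=(1+o(1))\Prb{\MP(F)=k_i}$ from that theorem, this yields the claimed product form.

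The step I expect to demand the most care is the reduction to the \emph{exact} multinomial: it rests on the fact that, conditional on $\E_1$, no single vertex $w$ can be shared between two vital areas, which is exactly what decouples the $m$ degrees. Establishing $q_i=(1+o(1))\Prb{I_{v_iw}=1\ |\ t_{v_i}}$ — that in the cold regime the vital area really contains essentially all of a vertex's neighbours — is the quantitative heart of the argument, and it is where the hypothesis $\beta>1$ enters, through the integrability of $\theta^{-\beta}$ that renders the angular tail beyond $\omega(N)A_{u,w}^{-1}$ negligible. By contrast, the Poissonisation and the factorisation over types are routine once the multinomial structure and the cell probabilities are in hand.
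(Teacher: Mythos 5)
Your proposal follows essentially the same route as the paper: conditioning on the positions so that the disjoint vital areas make the events $\A^w_{v_i}$ mutually exclusive, writing $\A(k_1,\ldots,k_m)$ as an exact multinomial, showing the cell probabilities are $(1+o(1))Ke^{\zeta t_{v_i}/2}/N$, Poissonising, and averaging over types. The only detail you gloss over is that the paper must first truncate to $t_{v_i}\le R/2-\omega(N)$ (a negligible restriction) so that $q_i=o(N^{-1/2})$ and the exponential approximation of $(1-\sum_i q_i)^{N-\sum_i k_i}$ is valid uniformly, since for types near $x_0$ one only has $q_i=\Theta(e^{\zeta t_{v_i}/2}/N)$, not $\Theta(1/N)$.
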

\begin{proof}
Note that if the positions of $v_1,\ldots, v_m$ have been fixed, then $\{  \cup_{i=1}^m \A_{v_i}^w \}_{w \in V_N^{v_1,\ldots, v_m}}$ is 
an independent family. 
Thus, assuming that the positions $(t_{v_1}, \theta_{v_1}),\ldots, (t_{v_m},\theta_{v_m})$ of $v_1,\ldots, v_m$ in $\D$ have been exposed 
so that $\E_1 \cap \mathcal{T}_1$ is realized, we can write
\begin{equation} \label{eq:EssentialProb}
\begin{split}
\Prb {\A (k_1,\ldots, k_m) & \ | \ (t_{v_1}, \theta_{v_1}),\ldots, (t_{v_m},\theta_{v_m})  } =  \\
&{N -m \choose k_1 k_2 \cdots N- \sum_{i=1}^m k_i} \left[ \prod_{i=1}^m \Prb {\A_{v_i} }^{k_i} \right]
 \left(1- \sum_{i=1}^m \Prb {\A_{v_i} }\right)^{N-\sum_{i=1}^m k_i}.
 \end{split}
\end{equation}
We now proceed by giving an estimate for $\Prb {\A_{v_i} }$. That is, we will calculate the probability that a 
vertex $w \not \in \{ v_1,\ldots, v_m\}$ is located within $A_{v_i}$ and it is adjacent to $v_i$. 
Setting $x_0' = \min \{ x_0,  R-t_{v_i} - \omega (N)\}$, we have 
\begin{equation} \label{eq:ProbA}
\begin{split}
\Prb {\A_{v_i}} &= {1\over \pi} \int_{0}^{x_0} \int_0^{\hat{\theta}_{v_i,w}'} p_{v_i,w} \rho (t_w) d \theta  dt_w \\ 
& = {1\over \pi} \int_{0}^{x_0'} \int_0^{\hat{\theta}_{v_i,w}'} p_{v_i,w} \rho (t_w) d \theta  dt_w 
+ {1\over \pi} \int_{x_0'}^{x_0} \int_0^{\hat{\theta}_{v_i,w}'} p_{v_i,w} \rho (t_w) d \theta  dt_w.
\end{split}
\end{equation} 
The second integral is bounded as in (\ref{eq:IndExp2nd}). 
In particular, it is bounded from above by  
\begin{equation*}
{1\over \pi} \int_{R-t_{v_i} - \omega (N)}^R \int_0^{\hat{\theta}_{v_i,w}'} p_{v_i,w} \rho (t_w) d \theta  dt_w 
= O \left(e^{\alpha \omega (N)}\left( {e^{\zeta t_{v_i}/2} \over N} \right)^{2\alpha /\zeta} \right).
\end{equation*}
Regarding the first integral, we argue as in (\ref{eq:1stIntegral}), (\ref{eq:2ndIntegral}) and (\ref{eq:IntLower}). 
Recall that for $\beta > 1$, we defined $\tilde{\theta}_{v_i,w} = {1\over \omega (N)}~A_{v_i,w}^{-1}$. 
\begin{equation*} 
\begin{split} 
\int_0^{\hat{\theta}_{v_i,w}'} p_{v_i,w} d \theta = \int_0^{\tilde{\theta}_{v_i,w}} p_{v_i,w} d \theta + 
\int_{\tilde{\theta}_{v_i,w}}^{\hat{\theta}_{v_i,w}'} p_{v_i,w} d \theta 
= \int_{\tilde{\theta}_{v_i,w}}^{\hat{\theta}_{v_i,w}'} p_{v_i,w} d \theta + o\left( A_{v_i,w}^{-1}\right). 
\end{split}
\end{equation*} 
For the first integral, we imitate the calculation in (\ref{eq:IntLower}), expressing $p_{v_i,w}$ using Lemma~\ref{lem:Dist} 
and applying the transformation  $z=C^{1/\beta}~A_{v_i,w}~{\theta \over 2}$. We obtain
\begin{equation*} 
\int_{\tilde{\theta}_{v_i,w}}^{\hat{\theta}_{v_i,w}'} p_{v_i,w} d \theta  = (1+o(1))~{C_{\beta} \over A_{v_i,w}},
\end{equation*}
where $C_\beta$ is as in Lemma~\ref{lem:AngleAv} for $\beta > 1$. 

Thereby, as in the previous section, the first integral in (\ref{eq:ProbA}) becomes
\begin{equation*}
\begin{split} 
&{1\over \pi} \int_{0}^{x_0'} \int_0^{\hat{\theta}_{v_i,w}'} p_{v_i,w} \rho (t_w) d \theta  dt_w  
= (1+o(1)) C_\beta \int_{0}^{x_0'} A_{v_i,w}^{-1} \rho (t_w) d t_w \\
& (1+o(1)) {2\alpha  C_\beta \over 2\alpha  - \zeta} {e^{\zeta t_{v_i}/2} \over N}. 
\end{split}
\end{equation*}
With $K=2 \alpha C_\beta/ (2\alpha - \zeta)$, as it was set in Lemma~\ref{lem:IndExp} for $\beta > 1$ and
substituting the above estimates into (\ref{eq:ProbA}) we obtain
\begin{equation} \label{eq:ProbAFinal} 
\begin{split}
\Prb {\A_{v_i}}  = (1+o(1))~K~{e^{\zeta t_{v_i}/2} \over N}.
\end{split}
\end{equation}
Under the assumption that $t_{v_i} \leq R/2 - \omega (N)$, we have $e^{\zeta t_{v_i}/2} / N = o\left( {1\over N^{1/2}}\right)$.
Thus, if $t_{v_i} \leq R/2 - \omega (N)$, for all $i=1,\ldots, m$ 
\begin{equation*}
\begin{split}
 \left(1- \sum_{i=1}^m \Prb {\A_{v_i} }\right)^{N-\sum_{i=1}^m k_i} = \exp \left( - (1+o(1))~K 
\sum_{i=1}^m  e^{\zeta t_{v_i}/2} \right).
\end{split}
\end{equation*}
Substituting this estimate as well as that in (\ref{eq:ProbAFinal}) into (\ref{eq:EssentialProb}) we obtain that uniformly for
all $(t_{v_1},\ldots , t_{v_m}) \in [0, \min \{R/2 - \omega (N) , x_0 \}]^{m}$ and all $\theta_{v_1},\ldots, \theta_{v_m} \in (0,2\pi ]$ 
such that $\E_1 \cap \mathcal{T}_1$ is realized:
\begin{equation} \label{eq:PoissonApx}
\begin{split}
& \Prb  {\A (k_1,\ldots, k_m) \ | \ (t_{v_1}, \theta_{v_1}),\ldots, (t_{v_m},\theta_{v_m}) } = \\
& (1+o(1))  
\prod_{i=1}^m {\left( K e^{\zeta t_{v_i}/2} \right)^{k_i}\over k_i!} \exp \left(-(1+o(1)) K e^{\zeta t_{v_i}/2} \right) 
% \left( {2K \over 2 - \zeta } \right)^{\sum_{i=1}^m k_i}~
% e^{{\zeta \over 2} \sum_{i=1}^m k_i t_{v_i}}
% ~\exp \left( - {2K \over 2 -\zeta} \sum_{i=1}^m  e^{\zeta t_{v_i}/2} \right) \\ 
 = (1+o(1)) \prod_{i=1}^m \Prb {D_{v_i} = k_i},
\end{split}
\end{equation}
by Lemma~\ref{lem:PoissonApx}. 
By (\ref{eq:2ndInt}), the probability that there exists an index $i$ with $1\leq i \leq m$ such that $t_{v_i} > R/2 - \omega (N)$ is $o(1)$. 
Hence, averaging over all $(t_{v_i}, \theta_{v_i})$, for $i=1,\ldots, m$, on the measure conditional on $\E_1 \cap \mathcal{T}_1$, 
the lemma follows. 
\end{proof}
We conclude the proof of (\ref{eq:2ndThmToProve}) with the proof of (\ref{eq:B1}). 
\begin{lemma} \label{lem:B1Apx}
For any $\beta > 1$ and any $0 < \zeta/\alpha  < 2$ we have 
$$\Prb{\mathcal{B}_1} = o(1).$$ 
\end{lemma}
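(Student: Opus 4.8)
The plan is to bound $\Prb{\mathcal{B}_1}$ by a union bound over the at most $mN$ relevant events $\tilde{\A}^w_{v_i}$. Since all vertices $w \notin \{v_1,\ldots,v_m\}$ are exchangeable, $\Prb{\tilde{\A}^w_{v_i}}$ does not depend on $w$, so
$$\Prb{\mathcal{B}_1} \le \sum_{i=1}^m \sum_{w} \Prb{\tilde{\A}^w_{v_i}} = \sum_{i=1}^m (N-m)\,\Prb{\tilde{\A}^w_{v_i}},$$
and it suffices to prove $\Prb{\tilde{\A}^w_{v_i}} = o(1/N)$ for each fixed $i$; then $\Prb{\mathcal{B}_1} = m(N-m)\,o(1/N) = o(1)$ as $m$ is fixed. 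I would compute $\Prb{\tilde{\A}^w_{v_i}}$ by conditioning on the types $t_{v_i}$ and $t_w$ and then integrating them out.

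The core estimate is the angular one. Fix $t_{v_i}, t_w \le x_0$ and suppose first that $\hat{\theta}'_{v_i,w} = \omega(N) A_{v_i,w}^{-1} < \pi$, so the vital area of $v_i$ is a proper arc and, at this type, being outside it forces $\theta_{v_i,w} > \hat{\theta}'_{v_i,w}$. In this regime $A_{v_i,w} > \omega(N)/\pi$, whence $R - t_{v_i} - t_w \to \infty$ and, by Claim~\ref{clm:IntBndrs}, $\theta \ge \hat{\theta}'_{v_i,w} \gg \hat{\theta}_{v_i,w}$ throughout the range of integration. I can therefore invoke Lemma~\ref{lem:Dist} exactly as in the proof of Lemma~\ref{lem:AngleAv}, replacing the denominator by $C A_{v_i,w}^\beta \sin^\beta(\theta/2)+1$ (with $C = 1+o(1)$ uniformly, since $\hat{\theta}'_{v_i,w} \gg A_{v_i,w}^{-1} \gg \hat{\theta}_{v_i,w}$), and apply the substitution $z = C^{1/\beta} A_{v_i,w}\,\theta/2$ with $C' = C^{1/\beta}/2$. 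The point is that the lower endpoint becomes $C' A_{v_i,w}\hat{\theta}'_{v_i,w} = C'\omega(N)$, \emph{independent of the types}, so
$$\frac{1}{\pi}\int_{\hat{\theta}'_{v_i,w}}^{\pi} p_{v_i,w}\,d\theta = \frac{1+o(1)}{\pi C' A_{v_i,w}} \int_{C'\omega(N)}^{C'\pi A_{v_i,w}} \frac{dz}{z^\beta+1} = o\!\left(A_{v_i,w}^{-1}\right),$$
uniformly, because for $\beta>1$ the last integral is $O\big(\omega(N)^{1-\beta}\big)=o(1)$. When instead $\hat{\theta}'_{v_i,w}\ge\pi$ the vital area covers all angles, so there is no angular contribution and only the event $t_w > x_0$ can place $w$ outside $A_{v_i}$. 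Writing $A_{v_i,w}^{-1} = e^{\zeta t_{v_i}/2}e^{\zeta t_w/2}/N$, this is the heart of the argument: leaving the vital area costs an extra factor $\omega(N)^{1-\beta}\to 0$ that will absorb the factor $N$ from the union bound.

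It remains to integrate out the types. For the radial integral over $t_w$ I would write $\Prb{\tilde{\A}^w_{v_i}\mid t_{v_i}}$ as the contribution from $t_w \le x_0$, namely $\int_0^{x_0} o(e^{\zeta t_{v_i}/2}e^{\zeta t_w/2}/N)\,\rho(t_w)\,dt_w$, plus the contribution from $t_w > x_0$, which is at most $\Prb{t_w > x_0} = o(1/N)$ by Corollary~\ref{cor:EffectiveArea}. Pulling the uniform $o(\cdot)$ out of the first integral and using $\int_0^{x_0} e^{\zeta t_w/2}\rho(t_w)\,dt_w = O(1)$ (a convergent exponential integral, as in Lemma~\ref{lem:IndExp}, since $\alpha > \zeta/2$) gives $\Prb{\tilde{\A}^w_{v_i}\mid t_{v_i}} = o(e^{\zeta t_{v_i}/2}/N) + o(1/N)$. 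Integrating over $t_{v_i}$ against $\rho$, treating $t_{v_i} > x_0$ separately (again $o(1/N)$) and bounding $\int_0^{x_0} e^{\zeta t_{v_i}/2}\rho(t_{v_i})\,dt_{v_i} = O(1)$ by the same computation, yields $\Prb{\tilde{\A}^w_{v_i}} = o(1/N)$, and the union bound closes. The step I expect to be the main obstacle is making the angular tail estimate genuinely uniform over all admissible type pairs, in particular the transitional region where $A_{v_i,w}$ is only of order $\omega(N)$ (so $\hat{\theta}'_{v_i,w}$ is close to $\pi$): there one must check that Lemma~\ref{lem:Dist} and Claim~\ref{clm:IntBndrs} still apply and that the shrinking interval $[\hat{\theta}'_{v_i,w},\pi]$ does not spoil the $o(A_{v_i,w}^{-1})$ bound, while verifying that the complementary region $\hat{\theta}'_{v_i,w}\ge\pi$ contributes nothing beyond the already-controlled event $\{t_w>x_0\}$. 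Everything downstream is routine once the uniformity is in hand, the crucial input being that $\beta>1$ supplies the decaying factor $\omega(N)^{1-\beta}$.
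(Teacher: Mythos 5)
Your proposal is correct and follows essentially the same route as the paper: a union bound over the $mN$ events $\tilde{\A}^w_{v_i}$, reduced to showing each has probability $o(1/N)$ by bounding the angular tail $\int_{\hat{\theta}'_{v_i,w}}^{\pi} p_{v_i,w}\,d\theta = O\bigl(A_{v_i,w}^{-1}\omega(N)^{1-\beta}\bigr)$ via Lemma~\ref{lem:Dist} and then integrating out the types. The only differences are cosmetic: the paper bounds $\sin(\theta/2)\geq \theta/\pi$ and integrates $\theta^{-\beta}$ directly rather than substituting $z=C'A_{v_i,w}\theta$, and it splits the radial integral at $t_w=R-t_{v_i}-\omega(N)$ rather than at $x_0$ (your explicit treatment of the case $\hat{\theta}'_{v_i,w}\geq\pi$ is a point the paper glosses over).
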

\begin{proof}
For a given $w \in V_{N}^{v_1,\ldots, v_m}$ and $i \in [m]$, 
the probability of the event $\tilde{A}_{v_i}^w$, conditional on $t_{v_i}$, can be written as follows: 
\begin{equation} \label{eq:B1Prob}
\begin{split}
\Prb {\tilde{\A}_{v_i}^w} &= {1\over \pi}\int_0^R \int_{\hat{\theta}_{v_i,w}'}^{\pi} p_{v_i,w} \rho (t_w) d \theta d t_w \\
&=  {1\over \pi}\int_0^{R  - t_{v_i}-\omega (N)} \int_{\hat{\theta}_{v_i,w}'}^{\pi} p_{v_i,w} \rho (t_w) d \theta d t_w
+ {1\over \pi}\int_{R-t_{v_i}-\omega (N)}^R \int_{\hat{\theta}_{v_i,w}'}^{\pi} p_{v_i,w} \rho (t_w) d \theta d t_w. 
%\\&\stackrel{(\ref{eq:3rdIntegral}),(\ref{eq:IndExp2nd})}{=} o \left( {e^{\zeta t_{v_i}/2}\over N} \right). 
\end{split}
\end{equation}
The second integral can be bounded as in (\ref{eq:IndExp2nd}) - we have 
\begin{equation} \label{eq:B1_2ndIntegral} 
{1\over \pi}\int_{R-t_{v_i}-\omega (N)}^R \int_{\hat{\theta}_{v_i,w}'}^{\pi} p_{v_i,w} \rho (t_w) d \theta d t_w 
= O \left( e^{\alpha \omega (N)}~\left( {e^{\zeta t_{v_i}/2}\over N}\right)^{2\alpha /\zeta} \right). 
\end{equation}
Regarding the first integral, we use the estimate obtained in Lemma~\ref{lem:Dist} to bound the inner integral. 
With $C$ as in the proof of Lemma~\ref{lem:AngleAv}, we have 
\begin{equation*} 
\begin{split}
\int_{\hat{\theta}_{v_i,w}'}^{\pi} p_{v_i,w} d \theta &= \int_{\hat{\theta}_{v_i,w}'}^{\pi} 
{1\over C A_{v_i,w}^{\beta} \sin^{\beta} \left( {\theta \over 2} \right) +1} d \theta \leq 
{1\over C A_{v_i,w}^{\beta}}~\int_{\hat{\theta}_{v_i,w}'}^{\pi} {1 \over \sin^{\beta} \left( {\theta \over 2} \right)} d\theta \\
& \stackrel{\sin \left( {\theta \over 2} \right) \geq {\theta \over \pi}}{\leq}  {\pi^{\beta} \over C A_{v_i,w}^{\beta}}~
\int_{\hat{\theta}_{v_i,w}'}^{\pi} \theta^{-\beta} d\theta = 
{\pi^{\beta} \over C (\beta -1) A_{v_i,w}^{\beta}}~\left( \hat{\theta}_{v_i,w}'^{-\beta + 1} - \pi^{-\beta +1}\right) \\
& = O\left( {A_{v_i,w}^{-1} \over \omega (N)^{\beta -1}}\right).
\end{split}
\end{equation*}
Thus, the first integral in (\ref{eq:B1Prob}) becomes
\begin{equation}  \label{eq:B1_1stIntegral}
\begin{split} 
&{1\over \pi}\int_0^{R  - t_{v_i}-\omega (N)} \int_{\hat{\theta}_{v_i,w}'}^{\pi} p_{v_i,w} \rho (t_w) d \theta d t_w = 
O \left( {1\over \omega (N)^{\beta -1}}~{e^{\zeta t_{v_i}/2} \over N}\right)~
\int_0^{R  - t_{v_i}-\omega (N)} e^{\zeta t_w /2} \rho (t_w) d t_w \\
& \stackrel{(\ref{eq:ColdIndExp1st})}{=} O \left( {1\over \omega (N)^{\beta -1}}~{e^{\zeta t_{v_i}/2} \over N}\right).
\end{split}
\end{equation}
Now, we take the average of each one of the bounds obtained in (\ref{eq:B1_1stIntegral}) and (\ref{eq:B1_2ndIntegral}), respectively, 
over $t_{v_i}$. To this end, we need the following integral, whose simple calculation we omit.
% \footnote{$\int_{0}^R e^{at} \rho(t) dt = {1\over 2}~ \int_0^R e^{at}~{e^{R-t} - e^{-(R-t)} \over \cosh R -1} dt = 
% {1\over 2(\cosh R - 1)}~  \int_0^R e^{a(R-t)}~(e^{t} - e^{-t}) dt = {e^{aR} \over 2 (\coshR -1)}~ \left( \int_0^R e^{(1-a)t} dt 
% - \int_0^R e^{(1+a)t} dt \right) $}
 We have  
\begin{equation*} \label{eq:BasicIntegral} 
\int_{0}^R e^{at} \rho(t) dt = \begin{cases}
\Theta (R) & \mbox{if $a=\alpha$} \\
\Theta (1) & \mbox{if $a = \zeta /2$}.
 \end{cases}
\end{equation*}
Thus, the bound in (\ref{eq:B1_2ndIntegral}) is $O\left( e^{\omega (N)}~{R \over N^{2/\zeta}}\right)$ and that in 
(\ref{eq:B1_1stIntegral}) is $O \left( {1\over \omega (N)^{\beta -1} N }\right)$. Since $\zeta/\alpha < 2$, both terms are $o(N^{-1})$. 
Therefore, the union bound implies that 
\begin{equation} \label{eq:Outside}
\Prb {\cup_{w \in V_{N}^{v_1,\ldots , v_m}} \cup_{i=1}^m \tilde{\A}_{v_i}^w} = o(1). 
\end{equation}
\end{proof}
Thus, the estimates obtained in Lemmas~\ref{lem:ProperEvent} and~\ref{lem:B1Apx} substituted in (\ref{eq:ToDeduce}) 
imply (\ref{eq:2ndThmToProve}). 

% We are now in a position to give the estimate on the probability that $D_{v_i} = k_i$ for $i=1,\ldots, m$. 
% \begin{equation} \label{eq:FinalCalculation}
% \begin{split}
% & \Prb  {D_{v_1}=k_1, \ldots, D_{v_m} = k_m} = 
% \Prb {\A (k_1,\ldots, k_m)} + \Prb {\cup_{w \in V_{N}^{v_1,\ldots , v_m}} \cup_{i=1}^m \tilde{\A}_{v_i}^w} \\
% & \stackrel{(\ref{eq:PoissonApx}), (\ref{eq:Outside})}{=} 
% \prod_{i=1}^m \Prb {\hat{D}_{v_i} = k_i} + o(1),
% \end{split}
% \end{equation}
% and this completes the proof of the asymptotic independence of the degrees of a given collection of vertices. 

\section{Conclusions - Open Questions}

This paper initiates a rigorous study of random geometric graphs on spaces of negative curvature. We considered the 
binomial model where the vertices are points that are randomly placed on a hyperbolic space and each pair is included as 
an edge of the random graph with probability that depends on their hyperbolic distance, independently of every other pair. 
This probability also depends on a parameter $\beta > 0$, which turns out to determine the typical ``behaviour" of the resulting 
random graph. We establish $\beta = 1$ as the critical value around which a transition occurs. Namely, when $\beta > 1$, the 
random graph is sparse and exhibits power-law degree sequence, whereas for $\beta < 1$, the degree of a typical vertex grows 
polynomially with $N$, which is the total number of vertices of the graph. For $\beta = 1$, the degree of a typical vertex grows logarithmically 
in $N$. 

This study raises a number of questions regarding the typical structure of these random graphs for various values of $\beta$ as well as 
the transition itself when $\beta$ ``crosses" the critical value $\beta = 1$. For example, what is the diameter of the random graph 
and the typical distance between two vertices that belong to the same component? Is there a giant component and, if yes, 
what is the distribution of the smaller components. What is the clustering coefficient of such a random graph and how does it depend on
$\beta$? When $\beta \leq 1$, is the random graph connected, and if yes, is it Hamiltonian? 
Can one describe the evolution of the random graph as $\beta$ approaches 1 from above?

\medskip

{\footnotesize \obeylines \parindent=0pt

Nikolaos Fountoulakis
School of Mathematics
University of Birmingham
Edgbaston
Birmingham
B15 2TT
UK
}
\begin{flushleft}
{\it{E-mail address}:
\tt{{n.fountoulakis@bham.ac.uk}}}
\end{flushleft}


\begin{thebibliography}{11}
\bibitem{BarAlb} R. Albert and A.-L. Barab\'asi,
Statistical mechanics of complex networks, \emph{Reviews of Modern Physics} {\bf 74} (2002), 47--97.

\bibitem{ar:Bac} F.~Baccelli, B.~Blaszczyszyn and O.~Mirsadeghi, Optimal paths on the space-time SINR random graph, \emph{Advances in 
Applied Probability} {\bf 43}(1) (2011), 131--150

\bibitem{Balogh} J.~Balogh, B.~Bollob\'as, M.~Krivelevich, T.~M\"uller and M. Walters, Hamilton cycles in random geometric graphs, 
\emph{Annals of Applied Probability} {\bf 21}(3) (2011), 1053--1072.

\bibitem{BJR} B.~Bollob\'as, S.~Janson and O.~Riordan, The phase transition in inhomogeneous random graphs, \emph{Random Structures and
Algorithms} {\bf 31}(2007), 3--122. 

\bibitem{Diam} B.~Bollob\'as and O.~Riordan, The diameter of a scale-free random graph, \emph{Combinatorica} {\bf 24} (2004), 5--34. 

\bibitem{DegSeq} B.~Bollob\'as, O.~Riordan, J.~Spencer and G.~Tusn\'ady, The degree sequence of a scale-free random graph process, 
\emph{Random Structures and Algorithms} {\bf 18} (2001), 279--290.

\bibitem{Boerner04+} K.~B\"orner, J.T.~Maru and R.L.~Goldstone, 
Colloquium Paper: Mapping Knowledge Domains: The simultaneous evolution of author and paper networks, 
\emph{Proc. Natl. Acad. Sci. USA} {\bf 101} (2004), 5266--5273.

\bibitem{ChungLu1+} F.~Chung and L.~Lu, The average distances in random graphs with given expected degrees, 
\emph{Proc. Natl. Acad. Sci. USA} {\bf 99} (2002), 15879--15882.

\bibitem{ChungLuComp+}  F.~Chung and L.~Lu, 
Connected components in random graphs with given expected degree sequences, \emph{Annals of Combinatorics} {\bf 6} (2002), 125--145.

\bibitem{ChungLuBook+} F.~Chung and L.~Lu, \emph{Complex Graphs and Networks}, AMS, 2006.

\bibitem{Gilbert61} E.~N.~Gilbert, Random plane networks, \emph{J. Soc. Indust. Appl. Math.} {\bf 9} (1961), 533--543.

\bibitem{ar:Gugel} L.~Gugelmann, K.~Panagiotou and U.~Peter, Random hyperbolic graphs: degree sequence and clustering, 
to appear in \emph{Proceedings of the 39th International Colloquium on Automata, Languages and Programming}. 

\bibitem{ar:Hafner72} R.~Hafner, The asymptotic distribution of random clumps, \emph{Computing} {\bf 10} (1972), 335--351.

\bibitem{bk:vdH} R.~van der Hofstad, \emph{Random Graphs and Complex Networks}, preprint available 
at~\texttt{http://www.win.tue.nl/$\sim$rhofstad/}.

\bibitem{ar:Krioukov} D.~Krioukov, F.~Papadopoulos, M.~Kitsak, A.~Vahdat and M.~Bogu\~n\'a, Hyperbolic Geometry of Complex Networks, 
\emph{Phys. Rev. E} {\bf 82} (2010), 036106. 

\bibitem{McDiarmid} C.J.H.~McDiarmid and T.~M\"uller, On the chromatic number of random geometric graphs, \emph{Combinatorica} 
{\bf 31}(4) (2011), 423--488. 

\bibitem{Mencezer02+} F.~Menczer, Growing and navigating the small world Web by local content, 
\emph{Proc. Natl. Acad. Sci. USA} {\bf 99} (2002), 14014--14019. 

\bibitem{ar:Park} J.~Park and M.~E.~J.~Newman, Statistical mechanics of networks, \emph{Phys. Rev. E} {\bf 70} (2004), 066117.

\bibitem{bk:Penrose} M.~Penrose, \emph{Random Geometric Graphs}, Oxford University Press, 2003.

\bibitem{ar:s02} B.~S{\"o}derberg, General formalism for inhomogeneous random graphs, \emph{Phys. Rev. E} {\bf 66}(3) (2002), 066121.
\end{thebibliography}
\end{document}